%% file: main.tex
\documentclass[11pt,letterpaper,reqno]{amsart}

\usepackage[margin=1.02in]{geometry}
\usepackage[T1]{fontenc}
\usepackage[utf8]{inputenc}
\usepackage{lmodern}
\usepackage{microtype,comment}
\usepackage{tikz-cd}

\usepackage{amssymb,mathtools,mathrsfs}
\usepackage{graphicx,booktabs,enumitem}

\usepackage{xcolor}
\definecolor{paperblue}{HTML}{124878}
\usepackage[
  colorlinks=true,
  linkcolor=paperblue,
  citecolor=paperblue,
  urlcolor=paperblue,
  linktoc=all,
  bookmarksnumbered=true,
  pdfencoding=auto,
  psdextra
]{hyperref}
\usepackage[
  backend=biber,
  style=alphabetic,
  maxnames=10,
  giveninits=true,
  sortcites=true,      
  sorting=nyt          
]{biblatex}
\AtEveryBibitem{%
  \ifboolexpr{
    test {\iffieldundef{year}}
    and
    test {\iffieldundef{issue}}
  }
    {\renewbibmacro*{issue+date}{}}
    {}%
}

\usepackage{aliascnt}
\usepackage[capitalize,nameinlink,noabbrev]{cleveref}
\input{macros.tex}
\numberwithin{equation}{section}

\title[Equidistribution of periodic points for automorphisms]{Equidistribution of saddle periodic points for automorphisms on compact K\"ahler manifolds}
\author{Muhan Luo}
\address{Department of Mathematics, National University of Singapore, Singapore 119076}
\email{luomuh98@gmail.com}

\author{Qi Zhou}
\address{Department of Mathematics, National University of Singapore, Singapore 119076}
\email{e1124859@u.nus.edu}

\date{} 
\subjclass[2020]{}
\keywords{}

\begin{document}

\begin{abstract}
    Let $f$ be a holomorphic automorphism of a compact K\"ahler manifold $X$ with simple cohomological action. We prove that the saddle periodic points of $f$ equidistribute with respect to the equilibrium measure. In particular, our result applies to holomorphic automorphisms of compact K\"ahler surfaces with positive topological entropy.
\end{abstract}

\maketitle
\tableofcontents
%\pagebreak 

\section{Introduction and main result}

The distribution of periodic points is a central question in complex dynamics. A natural problem is to determine whether their normalized counting measures converge to an invariant probability measure that describes the statistical behaviour of the system. This phenomenon has been established in a variety of complex dynamical systems through the work of Lyubich \cite{Lyubich83FAA} on rational maps on $\Pb^1$, Bedford--Lyubich--Smillie \cite{BLS93b} on polynomial automorphisms of $\C^2$, Briend--Duval \cite{Briend-Duval99} on endomorphisms of $\Pb^k$, Cantat \cite{Cantat01Acta} on automorphisms of projective K3 surfaces, Dinh--Sibony \cite{DS03JMPA} on polynomial-like maps, Diller--Dujardin--Guedj \cite{DDG10ENS} on rational maps of small topological degree on projective surfaces, and Dinh--Sibony \cite{DS16} on H\'enon-type automorphisms of $\C^k$. See also Dujardin \cite{DurjaDuke} for results on birational maps of rational surfaces. The authors' recent work \cite{LZ25} also proves this result for H\'enon-like maps of arbitrary dimension. These are only a few representative contributions to a much broader literature, and we make no attempt to give an exhaustive account here. 

However, for holomorphic automorphisms on general compact K\"ahler manifolds with simple action on cohomology, the problem has been open for decades, even in the case of dimension two \cite{Cantat01Acta}. See \cite{DS17} for more discussions on this topic. In this paper, we completely solve this question. 
\medskip

Let $X$ be a compact K\"ahler manifold of complex dimension $k\geq 1$, and let $f\colon X\to X$ be a holomorphic automorphism. For $0\leq q\leq k$, the $q$-th \textit{dynamical degree} of $f$ is defined to be the spectral radius of $f^*$ acting on $H^{q,q}(X,\C)$. Note that by a result of Gromov-Yomdin \cite{Gromov2003,Yomdin1987}, the topological entropy of $f$ equals $h_{\mathrm{top}}(f)=\max_{0\leq q\leq k}\log d_q$.

\begin{definition}\label{def:simple}
The cohomological action of $f$ is \emph{simple} if there is an index $p$ such that
    \begin{equation}\label{eq:gap}
        d:=d_p(f)>\max_{q\ne p}d_q(f),
    \end{equation}
and $d$ is an algebraically simple eigenvalue of $f^*|_{H^{p,p}(X,\C)}$, with every other eigenvalue of this operator having modulus strictly smaller than $d$. We call $d$ the \textit{main dynamical degree} of $f$.
\end{definition}

Since $d_0(f)=d_k(f)=1$, these assumptions imply $1\leq p\leq k-1$ and $d>1$. The dynamics of holomorphic automorphisms with simple cohomological action has been studied intensively, see for example \cite{DS10JAG,dTH-Dinh12AM}. See also \cite{OT15JMST} for an explicit example of automorphisms with positive entropy in higher dimension. 
 
Here we recall some of the results to state our main theorem. For more details, see \cref{sec:auto}. There are two positive closed currents $T^+$ and $T^-$, of bi-degrees $(p,p)$ and $(k-p,k-p)$, respectively, such that
\begin{equation}\label{eq:green}
    f^*T^+=dT^+, \qquad f_*T^-=dT^-.
\end{equation}
They are called the \textit{Green currents}. The \textit{equilibrium measure} of the system $f$ is defined by
\[
    \mu:=T^+\wedge T^-.
\]
It is a probability measure after normalizing $T^\pm$. It is also a measure of maximal entropy with $h_\mu(f)=h_{\mathrm{top}}(f)=\log d$. Throughout the paper, a \textit{periodic point of period $n$} means a fixed point of $f^n$; its least period is allowed to divide $n$. Our main result is the following.

\begin{theorem}\label{thm:main}
Assume that the cohomological action of $f$ is simple in the sense of \cref{def:simple}. Let $P_n$ be the set of isolated periodic points of period $n$ of $f$, counted with multiplicities, and let $SP_n\subset P_n$ be the saddle periodic points. Let $Q_n$ be either $P_n$ or $SP_n$. Then we have
    \[
        d^{-n}\sum_{z\in Q_n}\delta_z\to \mu
    \]
as $n$ goes to infinity, where $\delta_z$ denotes the Dirac measure at $z$.
\end{theorem}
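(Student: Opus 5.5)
We plan to prove \cref{thm:main} in two halves: an upper bound for the number of periodic points together with an identification of every possible limit of $d^{-n}\sum_{z\in P_n}\delta_z$, and a lower bound showing that saddle points alone already produce $\mu$. Since $SP_n\subset P_n$, it suffices to show two things:
\[
\limsup_n d^{-n}\,\#P_n\le 1 \qquad\text{and}\qquad \liminf_n d^{-n}\sum_{z\in SP_n}\delta_z\ge\mu
\]
in the weak sense on continuous test functions $\ge0$. Then every limit point $\nu$ of either sequence satisfies $\nu\ge\mu$ and $\nu(X)\le1=\mu(X)$, so $\nu=\mu$.

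\textbf{Upper bound.} Consider the graph $\Gamma_n\subset X\times X$ of $f^n$ and the diagonal $\Delta$. The isolated points of $\Gamma_n\cap\Delta$, counted with multiplicity, are bounded by the cohomological intersection number $\{\Gamma_n\}\cdot\{\Delta\}$. To justify this in the presence of possible positive-dimensional fixed components, we would use the excess-intersection/Fulton bound, or the Dinh--Sibony theory of super-potentials and the density of currents. That intersection number equals the Lefschetz number $\sum_q(-1)^q\operatorname{tr}(f^{n*}|_{H^q})$. By the spectral gap assumption \eqref{eq:gap} and the simplicity of $d$, together with the standard bound on the spectral radius on $H^{r,s}$ by $\sqrt{d_rd_s}<d$ for $(r,s)\neq(p,p)$, this number equals $d^n+o(d^n)$. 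For the bound on isolated points only, the key point is that the contribution of the non-isolated components is nonnegative. We expect to get this by a positivity argument: deform $[\Delta]$ to a smooth positive form cohomologous to it, and use that the Green currents control $d^{-n}[\Gamma_n]\to T^+\otimes T^-$ (after the usual identification, using \eqref{eq:green}).

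\textbf{Lower bound for saddles.} This is the main obstacle. We would follow the Bedford--Lyubich--Smillie / Dinh--Sibony strategy, in the form used in \cite{LZ25}. First, $\mu$ is mixing and hyperbolic: it has $p$ negative and $k-p$ positive Lyapunov exponents, bounded away from zero, which we derive from $h_\mu=\log d$ and the gap in the dynamical degrees via a Ruelle-type inequality. Pesin theory then gives, for $\mu$-almost every $x$, local stable and unstable manifolds of dimensions $k-p$ and $p$.

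Fix a small ball $B$ and a Pesin set of large measure. We realise $d^{-n}f^n_*[\text{unstable disc}]$ and $d^{-n}f^{-n}_*[\text{stable disc}]$, suitably normalized, as converging to $T^-$ and $T^+$ respectively; this uses the equidistribution of positive closed currents of the right bidimension under $f^{\pm n}$, a consequence of the simple action. By mixing of $\mu=T^+\wedge T^-$, the number of transverse intersections in $B$ between $f^{n/2}$ of the unstable plaques and $f^{-n/2}$ of the stable plaques is at least $(\mu(B)-\varepsilon)d^n$.

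Each such intersection, through a graph-transform/shadowing argument on the Pesin box, yields a saddle fixed point of $f^n$ in $B$. Distinct intersections give distinct points, again by hyperbolicity of the box. The delicate part is to get the transverse intersections counted with the right mass: we must ensure that almost all of the wedge product $T^+\wedge T^-$ on $B$ is carried by transverse intersections of Pesin plaques. For this we would use the laminar/woven structure of $T^\pm$ near generic points, or, in higher codimension, Dinh--Sibony's tangent-current and horizontal-like techniques as in \cite{LZ25}, uniformly in $n$.

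Letting $\varepsilon\to0$ and covering $X$ by small balls then yields $\liminf_n d^{-n}\sum_{z\in SP_n}\delta_z\ge\mu$, which completes the argument.
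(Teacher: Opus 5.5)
Your overall architecture is the same as the paper's final step: an upper bound $\limsup_n d^{-n}\#P_n\le 1$, a lower bound $\liminf_n d^{-n}\sum_{z\in SP_n}\delta_z\ge\mu$, and a sandwich. The upper bound is exactly \eqref{eq:DNV} from Dinh--Nguyen--Vu, and you should simply cite it. Your own sketch assumes the excess contribution of positive-dimensional fixed components to $\{\Gamma_n\}\cdot\{\Delta\}$ is nonnegative; on a general compact K\"ahler $X$ this is not automatic, and controlling it is precisely the content of \cite{DNV}.

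The genuine gap is the lower bound for saddles. Your counting step reads: ``by mixing, the number of transverse intersections in $B$ of $f^{n/2}$ of unstable plaques with $f^{-n/2}$ of stable plaques is at least $(\mu(B)-\varepsilon)d^n$''. It presupposes three things on a Pesin box:
\begin{enumerate}
\item[(i)] $T^+$ and $T^-$ are, up to small mass, uniformly laminar currents built from local stable and unstable plaques;
\item[(ii)] $d^{-n}f^n_*$ of these non-closed plaque currents converges to the corresponding piece of $T^-$;
\item[(iii)] $T^+\wedge T^-$ is computed by geometric transverse intersections of plaques.
\end{enumerate}
That structure is not available here. The known laminarity and geometric-intersection results come either from approximating $T^\pm$ by algebraic curves (in practice, projective surfaces, as in Cantat's K3 case) or from a global horizontal-like structure (H\'enon-type maps, \cite{LZ25}). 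An automorphism of a general compact K\"ahler manifold has neither, which is why the problem remained open even for K\"ahler surfaces. You have identified the hard step but supplied no mechanism for it.

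The missing idea is to get the lower bound without any geometry of currents, as the paper does.
\begin{enumerate}
\item[(1)] By \cref{lem:coding} (Ben Ovadia's coding plus Buzzi's injectivity theorem), there is an \emph{injective} H\"older semiconjugacy $\pi:(\Sigma,\sigma)\to(X,f)$ from an irreducible countable Markov shift. It satisfies $\pi_*\nu=\mu$ and sends symbolic periodic points to saddle periodic points.
\item[(2)] Since $h_\nu(\sigma)=\log d=h_{\mathrm{top}}(f)$, $\nu$ is the measure of maximal entropy of $\Sigma$. Hence $\Sigma$ is positive recurrent with Perron value $d$, and $\nu$ has the explicit form \eqref{eq:MME}.
\item[(3)] Injectivity and \eqref{eq:DNV} give $\#\Fix(\sigma^n)\le d^n+o(d^n)$. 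This forces the shift to be aperiodic, since a period $q$ would make $d^{-qn}\#\Fix(\sigma^{qn})$ asymptotically at least $q$.
\item[(4)] The Markov-chain convergence theorem then gives $d^{-n}(A^n)_{uv}\to r_v\ell_u$, so the symbolic periodic-orbit measures converge to $\nu$ (\cref{prop:symbolic}).
\item[(5)] Pushing forward by $\pi$ gives measures carried by $SP_n$ that converge to $\mu$, which is the lower bound you needed.
\end{enumerate}
The upper bound thus plays a second, essential role, excluding periodicity of the coding shift, and this has no counterpart in your plan.

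Two minor points. $\mu$ has $p$ positive and $k-p$ negative exponents, matching your unstable dimension $p$. This hyperbolicity is de Th\'elin's theorem, not a consequence of Ruelle's inequality.
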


For automorphisms of compact K\"ahler surfaces, having positive entropy implies simple action on cohomology. Our main theorem therefore yields the following corollary. This applies, in particular, to automorphisms of K3 surfaces with positive entropy, whose dynamics have been extensively studied; see, for example, \cite{Cantat01Acta,FT23CJM,FT24JGA,FT21AJM}. The corollary settles the equidistribution problem for general compact Kähler surfaces, which has remained open since Cantat's work \cite{Cantat01Acta} on projective K3 surfaces in 2001.

\begin{corollary}
    Let $f$ be a holomorphic automorphism of a compact K\"ahler surface $X$ with positive topological entropy $h:=h_{\mathrm{top}}(f)$, and let $\mu$ be its equilibrium probability measure. For each $n\geq1$, let $P_n$ be the set of isolated fixed points of $f^n$ with multiplicities, and let $SP_n\subset P_n$ be the subset of saddle points. Let $Q_n$ be either $P_n$ or $SP_n$. Then we have
\[
    e^{-nh}\sum_{z\in Q_n}\delta_z\;\longrightarrow\;\mu
\]
as $n\to\infty$. 
\end{corollary}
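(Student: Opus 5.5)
The corollary follows from \cref{thm:main} once we check its hypotheses and identify the normalizations. Let $X$ be a compact K\"ahler surface, so $k=2$, and let $f$ be an automorphism with $h=h_{\mathrm{top}}(f)>0$. Since $d_0(f)=d_2(f)=1$, the Gromov--Yomdin formula recalled in the introduction gives
\[
h=\log d_1(f),
\]
so $d:=d_1(f)=e^{h}>1$. The gap condition \eqref{eq:gap} therefore holds with $p=1$.

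It remains to check that $d$ is an algebraically simple eigenvalue of $f^*$ on $H^{1,1}(X,\C)$, and that every other eigenvalue has modulus strictly less than $d$. I would use the Hodge index theorem. The intersection form on $H^{1,1}(X,\R)$ has signature $(1,h^{1,1}-1)$, and $f^*$ preserves it, so $f^*$ lies in the orthogonal group $O(1,h^{1,1}-1)$.

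The operator $f^*$ also preserves the closed nef cone. By Perron--Frobenius/Birkhoff there is therefore a nef class $v^+$ with $f^*v^+=dv^+$. Applying the same argument to $f^{-1}$ gives a nef class $v^-$ with $f_*v^-=dv^-$, equivalently $f^*v^-=d^{-1}v^-$. Both classes are isotropic, since
\[
(v^\pm)^2=(f^*v^\pm)^2=d^{\pm2}(v^\pm)^2
\]
and $d\neq1$. They are not proportional, because they have different eigenvalues. Two non-proportional nef classes satisfy $v^+\cdot v^->0$.

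Thus the plane $V=\langle v^+,v^-\rangle$ is hyperbolic, and its orthogonal complement $V^\perp$ is negative definite by the Hodge index theorem. Since $f^*$ preserves the form and $V$, it preserves $V^\perp$, where it acts as an isometry of a definite form. Hence all eigenvalues of $f^*|_{V^\perp}$ have modulus $1$.

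The full spectrum of $f^*$ is then $\{d,\,d^{-1}\}$ together with eigenvalues of modulus one. So $d$ is algebraically simple and strictly dominant, and the cohomological action of $f$ is simple in the sense of \cref{def:simple} with $p=1$.

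Applying \cref{thm:main} gives
\[
d^{-n}\sum_{z\in Q_n}\delta_z\to\mu,
\]
and $d^{-n}=e^{-nh}$. The sets $P_n$ and $SP_n$ in the corollary are by definition the same as in the theorem, and $\mu=T^+\wedge T^-$ is the equilibrium probability measure with the normalization fixed there. This proves the corollary.

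The only real step is the spectral argument above. The point needing care is that $v^+\cdot v^->0$, which uses the fact that isotropic nef classes with vanishing product must be proportional (the light-cone argument in signature $(1,m)$). Everything else is a direct specialization of \cref{thm:main}.
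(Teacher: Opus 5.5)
Your proposal is correct and follows the paper's route: the paper asserts that positive entropy on a compact K\"ahler surface forces simple cohomological action, then applies \cref{thm:main} with $p=1$ and $d=d_1(f)=e^{h}$. Your Hodge index argument (isotropic nef eigenclasses $v^\pm$ with $v^+\cdot v^->0$, and a negative definite complement on which $f^*$ is an isometry) is the standard justification of that assertion, which the paper leaves implicit.
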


\begin{remark}
We can further strengthen the result of \cref{thm:main}. Let $\chi^{+}>0$ and $\chi^{-}<0$ be the positive and negative Lyapunov exponents of $\mu$ with the smallest absolute value, respectively. For $0<\vep < \min\{ \chi^+, -\chi^- \} $, let $SP_{n ,\vep}\subset SP_n$ consist of the points $x$ for which $D_x f^n$ has $p$ eigenvalues whose absolute values are larger than $\exp \big(n(\chi^+-\vep)\big)$ and $(k-p)$ eigenvalues whose absolute values less than $\exp\big(n(\chi^-+\vep)\big)$. Every such point in $SP_{n ,\vep}$ is a saddle $n$-periodic point with $p$ repelling and $(k-p)$ attracting complex directions. Then, we have 
\[
    d^{-n}\sum_{z\in SP_{n ,\vep}}\delta_z \longrightarrow\mu.
\]

In fact, since $\mu$ is mixing, the lower bound for the number of points in $SP_{n,\vep}$ in \cite[Theorem~9.1]{ALP24MAMS}, together with the upper bound \eqref{eq:DNV}, implies that $\#(P_n\setminus SP_{n,\vep})=o(d^n)$. The conclusion therefore follows from \cref{thm:main}. To keep the presentation concise, we omit the verification of the technical hypotheses of \cite[Theorem~9.1]{ALP24MAMS} and content ourselves with the current formulation of \cref{thm:main}.
\end{remark}

Our argument also gives another proof of the uniqueness of the measure of maximal entropy in our setting, which is first proven by de Th\'elin-Dinh \cite[Theorem 1.2]{dTH-Dinh12AM}. See also \cite[Theorem 0.1]{Cantat01Acta}. 

\begin{corollary}
    Let $f$ be as in \cref{thm:main}. Then the equilibrium measure $\mu$ is the unique $f$-invariant probability measure of maximal entropy. 
\end{corollary}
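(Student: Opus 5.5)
Uniqueness will follow from \cref{thm:main}, applied with $Q_n=SP_n$, combined with a standard entropy-counting argument. Let $\nu$ be an $f$-invariant probability measure with $h_\nu(f)=\log d$; we want $\nu=\mu$. By ergodic decomposition and affinity of entropy, almost every ergodic component of $\nu$ also has entropy $\log d$, since $\log d$ is the maximal value. So it suffices to treat ergodic $\nu$. Ruelle's inequality and the gap \eqref{eq:gap} force $\nu$ to be hyperbolic of saddle type. Indeed, the entropy is bounded by the sum of positive exponents, and also by the sum of $|$negative exponents$|$ applied to $f^{-1}$. A measure with $h_\nu=\log d_p$ and $d_p>d_q$ for $q\ne p$ must have exactly $p$ positive and $k-p$ negative exponents, none zero. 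This is the de Th\'elin--Dinh style argument via $\log d_q$ bounds on entropy restricted to unstable dimension.

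Next, I apply Katok's closing lemma / horseshoe theory for hyperbolic ergodic measures (in the form valid for $C^{1+\alpha}$ diffeomorphisms of compact manifolds). For every $\vep>0$ it produces a compact invariant hyperbolic horseshoe $\Lambda_\vep$ carrying measures that approximate $\nu$ weak-$*$. It also produces saddle periodic points of period $n$ in an $\vep$-neighborhood of the support behaviour of $\nu$, with $\#\{$saddle points of period $n$ whose orbit measures are $\vep$-close to $\nu\}\ge e^{n(\log d-\vep)}$ for large $n$. In fact one gets these points distributed so that the measure $d^{-n}\sum\delta_z$ restricted to them has mass at least $e^{-n\vep}$, and the corresponding normalized measure is close to $\nu$.

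The key step is to upgrade this to a positive-proportion statement so that it contradicts \cref{thm:main} if $\nu\ne\mu$. I would use the upper bound \eqref{eq:DNV} on $\#P_n$, which is $d^n+o(d^n)$, together with the convergence $d^{-n}\sum_{SP_n}\delta_z\to\mu$. Pick a continuous test function $\phi$ with $\int\phi\,d\nu>\int\phi\,d\mu+3\eta$. Periodic orbit measures are $f$-invariant, so averaging $\phi$ along orbits shows that if a set $G_n\subset SP_n$ of points has orbit averages $\ge\int\phi d\nu-\eta$ and $\#G_n\ge c\,d^n$, then the limit measure would give $\int\phi\,d\mu\ge(1-c)\min\phi+c(\int\phi d\nu-\eta)$. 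That is not immediately a contradiction. I would therefore instead use the variational/large deviation formulation. The counting measures converging to $\mu$, together with $\#P_n\le d^n(1+o(1))$, yield via a standard argument (as in Bowen, or Lyubich's proof for $\Pb^1$) that every maximal-entropy measure is a limit of periodic orbit averages weighted uniformly. Concretely, the separated-set count from Katok gives $\ge e^{n(\log d-\vep)}$ periodic points whose orbit measures are near $\nu$. The points whose orbit measures lie outside a weak-$*$ neighborhood $U$ of $\mu$ must number $o(d^n)$, and I would upgrade this to exponentially smaller, $\le e^{n(\log d-\delta)}$, using the entropy-gap mechanism. This exponential-smallness step is the main obstacle. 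I would try a Misiurewicz-type argument: periodic points with orbit measures outside $U$ form $(n,\epsilon)$-separated sets, and their limit measures would be invariant, not equal to $\mu$, of entropy $\log d$. That is circular, so instead I would rely on the de Th\'elin--Dinh estimate that entropy of measures far from $\mu$ is strictly less than $\log d$, or derive the exponential rate directly from the quantitative convergence in the proof of \cref{thm:main}.
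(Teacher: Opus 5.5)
Your reduction to ergodic $\nu$ is fine. The hyperbolicity step is also fine, but only with the right attribution. Ruelle's inequality for $f$ and $f^{-1}$ gives one positive and one negative exponent and cannot rule out zero exponents, so you need de Th\'elin's bound \cite[Corollary~2]{deT08Invent}, which is what the paper invokes.

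The genuine gap is the counting step. Katok's horseshoes give only a lower bound of order $e^{n(\log d-\vep)}=o(d^n)$ for saddle points of period $n$ whose orbit measures are close to $\nu$. As you notice, this is entirely compatible with \cref{thm:main}. Neither of your proposed repairs works:
\begin{itemize}
\item The claim that invariant measures outside a neighbourhood of $\mu$ have entropy bounded away from $\log d$ is a strengthening of the uniqueness statement itself (de Th\'elin--Dinh's \cite[Theorem~1.2]{dTH-Dinh12AM}), so invoking it is circular.
\item The proof of \cref{thm:main} is purely soft and gives no rate of convergence from which exponential smallness could be extracted.
\end{itemize}

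The missing idea is to code $\nu$ itself rather than approximate it by horseshoes. Apply \cref{lem:coding} to $\nu$ to obtain an irreducible countable Markov shift $\Sigma_\nu$, an injective H\"older semiconjugacy $\pi_\nu$, and a lift $\tilde\nu$. Exactly as in the proof of \cref{thm:main}, injectivity and $h_{\mathrm{top}}(f)=\log d$ give $h(\Sigma_\nu)=\log d$. Hence $\tilde\nu$ is a measure of maximal entropy and $\Sigma_\nu$ is positive recurrent with Perron value $d$. Periodic points of $\Sigma_\nu$ go injectively to saddle points in $P_n$, so \eqref{eq:DNV} gives \eqref{eq:trace-upper}. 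Then \cref{prop:symbolic} yields $\beta^\nu_n:=d^{-n}\sum_{u\in\Fix(\sigma^n)}\delta_{\pi_\nu(u)}\to\nu$.

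This is the quantitative input horseshoes cannot supply. Positive recurrence gives $d^{-n}(A^n)_{uu}\to\rho_u>0$ with no $\vep$-loss in the exponent, so the coded saddle points make up asymptotically a full proportion of $d^n$. To conclude, set $\zeta_n:=d^{-n}\sum_{z\in P_n}\delta_z$. We have $0\le\beta^\nu_n\le\zeta_n$, $\zeta_n\to\mu$, and $\limsup_n\zeta_n(X)\le1$. So the positive measures $\zeta_n-\beta^\nu_n$ have total mass tending to $0$, whence $\nu=\mu$.

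Incidentally, your worry that a positive proportion would not suffice comes from tracking only orbit averages of a single test function. If submeasures of $\zeta_n$ converge weak-$*$ to $c\nu$ with $c>0$, then $c\nu\le\mu$, so $\nu\ll\mu$. That already forces $\nu=\mu$, since distinct ergodic invariant measures are mutually singular.
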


Indeed, if $\nu$ is another invariant ergodic probability measure of maximal entropy, de Th\'elin \cite[Corollary 2]{deT08Invent} proves that $\nu$ is hyperbolic. Then, one can apply \cref{prop:symbolic} and \cref{lem:coding} to show that saddle periodic points equidistribute to the measure $\nu$. Hence $\nu=\mu$.
\medskip

Our proof of \cref{thm:main} uses the symbolic coding theorem of Ben Ovadia \cite{Ovadia18JMD} and a theorem of Buzzi \cite{Buzzi20JMD}. These results yield an injective H\"older-continuous map $\pi:(\Sigma,\sigma)\to (X,f)$ with $f\circ \pi = \pi\circ \sigma$, where $(\Sigma,\sigma)$ is an irreducible countable Markov shift, and periodic points in $\Sigma$ are mapped to the saddle periodic points on $X$. Relevant definitions on Markov shifts can be found in \cref{sec:CMS}. The existence of a measure of maximal entropy ensures that the shift is positive recurrent, and one can give an explicit characterization of the measure. On the other hand, by Dinh-Nguyen-Vu \cite[Theorem~1.3]{DNV}, we have the estimate
\[
    \#P_n\leq d^n+o(d^n).
\]
By injectivity of $\pi$, this transfers to a bound on the number of periodic points of $(\Sigma,\sigma)$. We show that this bound yields equidistribution on the system $(\Sigma,\sigma)$ and finally gives the result of \cref{thm:main}.
\medskip

The paper is organized as follows. \cref{sec:auto} recalls the required facts about Green currents, equilibrium measures, and periodic point counts. \cref{sec:CMS} reviews basic notions and properties of countable Markov shifts. \cref{sec:symbolic-proof} establishes the equidistribution result for a symbolic dynamics model. \cref{sec:main} completes the proof of \cref{thm:main}.

\medskip
\noindent\textbf{Acknowledgement.} The first author is supported by the NUS grant A-8002488-00-00. The second author is supported by the President's Graduate Fellowship. The main idea of the proof is suggested by GPT-6 Astra. The authors verified the idea, filled in the details, and wrote the paper. The authors are fully responsible for all mathematical content. We also would like to mention that before the recent surge in AI use, the authors had obtained the same result for projective manifolds following another approach. 

\section{Holomorphic automorphisms on compact K\"ahler manifolds}\label{sec:auto}

In this section, we recall basic properties of holomorphic automorphisms on compact K\"ahler manifolds. All result mentioned here along with their proofs can be found in \cite{DS05JAMS,DS10JAG,DNV}. 

Let $f$ be a holomorphic automorphism of a compact K\"ahler manifold $X$ of complex dimension $k$. Suppose the action of $f^*$ on cohomology is simple and the main dynamical degree is $d:=d_p(f)>1$ for some $1\leq p\leq k-1$. By Dinh--Sibony \cite[Theorem~4.2.1]{DS10JAG}, there exist nonzero positive closed Green currents $T^+$ and $T^-$ of bidegrees $(p,p)$ and $(k-p,k-p)$, respectively, satisfying
\[
    f^*T^+=dT^+,\qquad f_*T^-=dT^-.
\]
They have H\"older continuous super-potentials, so their wedge product is well-defined. The equilibrium measure is then defined to be
\[
    \mu:=T^+\wedge T^-.
\]
After normalization, $\mu$ is a probability measure. By \cite[Theorem~4.4.2]{DS10JAG}, $\mu$ is invariant and mixing, hence ergodic, and
\begin{equation}\label{eq:entropy}
    h_\mu(f)=h_{\mathrm{top}}(f)=\log d.
\end{equation}

Moreover, $\mu$ is hyperbolic. Recall that an invariant measure is \textit{hyperbolic} if its Lyapounov exponents are non-zero. In fact, we have a more precise estimate of the Lyapounov exponents: $\mu$ admits $p$ positive Lyapounov exponents larger than or equal to $\frac12\log\frac{d_p}{d_{p-1}}$ and
$k-p$ negative exponents at most equal to $-\frac12\log\frac{d_p}{d_{p+1}}$.

As in the introduction, let $P_n$ denote the isolated fixed points of $f^n$, counted with multiplicities. Under our cohomological assumptions, Dinh--Nguyen--Vu \cite[Theorem~1.3]{DNV} give
\begin{equation}\label{eq:DNV}
    \#P_n\leq d^n+o(d^n)\qquad\text{as }n\to\infty.
\end{equation}
This is crucial for the derivation of our main result. One should note that the multiplicity of saddle periodic points is always 1.

\section{Countable Markov shifts}\label{sec:CMS}

\subsection{Basic notions}

We recall basic results of countable Markov shifts. All results mentioned here can be found in \cite[Chapter 7]{Kitchens} and \cite[Chapter 1]{Durrett99ESP}. Let $\cG=(\cV, E)$ be a directed graph with a countable collection of vertices $\cV$ s.t. every vertex has at least one ingoing and one outgoing edge in $E$. We always assume that there is at most one directed edge from \(u\) to \(v\) for every ordered pair of vertices \((u,v)\). We write $u\to v$ when such an edge exists. The \textit{adjacency matrix} is defined to be the countable matrix $A$ such that the vertices of the graph are the indices for the rows and columns of $A$, and for any two vertices $u,v$, 
\[
    A_{uv}=
    \begin{cases}
        1, & u\to v;\\
        0, & \text{otherwise}.
    \end{cases}
\]
The \textit{countable Markov shift} associated to $\cG$ is the set
 \[
    \Sigma=\{(v_i)_{i\in\Z}\in\cV^\Z: A_{v_iv_{i+1}}=1,\ \forall i\in\Z\}.
\]
equipped with the left-shift
\[
    \sigma \colon \Sigma \to \Sigma, \qquad \sigma\bigl((v_i)_{i\in\mathbb{Z}}\bigr)= (v_{i+1})_{i\in\mathbb{Z}},
\]
and the metric
\[
    d(u,v):= \exp\left(-\min\left\{n\in\mathbb{N}_0 :u_n\neq v_n \text{ or } u_{-n}\neq v_{-n}\right\}\right)
\]
for \(u=(u_n)_{n\in\mathbb{Z}}\), \(v=(v_n)_{n\in\mathbb{Z}}\). With this metric \(\Sigma\) is a complete separable metric space, i.e., a metric Polish space.

A \emph{Markovian subshift} of \(\Sigma(\mathcal{G})\) is a subset of the form \(\Sigma(\mathcal{G}')\), where \(\mathcal{G}'=(\mathcal{V}', E')\) is a subgraph of \(\mathcal{G}\) satisfying the same condition for $\mathcal{G}$ . The following definition is due to Sarig: given a countable Markov shift \(\Sigma\), 
\begin{equation}\label{eq:regular}
    \Sigma^\#:=\left\{(v_i)_{i\in\mathbb{Z}}\in\Sigma : \exists\,v',w'\in\mathcal{V}(\Sigma),\ \exists\,n_k,m_k\uparrow\infty\ \text{s.t.}\  v_{n_k}=v', \  v_{-m_k}=w',\  \forall\,k\geq 1 \right\}.
\end{equation}
By the Poincaré recurrence theorem, every \(\sigma\)-invariant probability measure $\mu$ is carried by \(\Sigma^\#\), i.e., $\mu(\Sigma^{\#})=1$. Furthermore, every periodic point of \(\sigma\) is in \(\Sigma^\#\). Note that \(\Sigma^\#\) is not necessarily a closed subset of \(\Sigma\).

A \textit{path} of length $n$ is a sequence of vertices $(u_0,u_1,...,u_n)$ such that $u_i\to u_{i+1}$ for all $0\leq i\leq n-1$. This path is called a \textit{loop} if $u_0 = u_n$. We can consider the product of $A$ with itself by the usual law of matrix product. In particular, $(A^n)_{ab}$ is the number of paths $(u_0,u_1,...,u_n)$ such that $u_0=a$ and $u_n=b$. By convention, $(A^n)_{ab}$ takes $\infty$ if there are infinitely many paths of length $n$ from $a$ to $b$.

\begin{definition}
    The Markov shift $\Sigma$ associated with $\cG$ is \textit{irreducible} if for all $u,v\in\cV$, there exists a path from $u$ to $v$; equivalently, $\forall u,v\in\cV$, $\exists n>1$ such that $(A^n)_{uv}>0$. The \textit{period} of an irreducible Markov shift $\Sigma$ is the greatest common divisor of the lengths of loops in the graph. Equivalently, for any fixed $u\in\cV$, the \textit{period} is defined by
    \[
        q:=\gcd\{n\geq1:(A^n)_{uu}>0\},
    \]
    which is independent of the choice of $u$. An irreducible Markov shift $\Sigma$ is \textit{aperiodic} if its period is 1.
\end{definition}

\subsection{Positive recurrence and entropy}

\begin{definition}
    When $\Sigma$ is an irreducible countable Markov shift, the \textit{Perron value} of $\Sigma$ is defined to be
\[
    \lambda:={\limsup}_{n\to\infty}((A^n)_{uu})^{1/n}.
\]
It is independent of the choice of $u$ and takes values in $[1, \infty]$.
\end{definition}

In the following, we assume that the countable Markov shift $\Sigma$ is irreducible and its Perron value $\lambda$ is finite. When $\lambda < \infty$, it is easy to see that $(A^n)_{uv}$ is always finite for any $u,v$ and $n\geq 1$. We now discuss recurrence properties of an irreducible countable Markov shift $\Sigma$. For any pair of vertices $u$ and $v$, define 
\[
    a_{uv}(0)=\delta_{uv}\quad \text{and}\quad a_{uv}(n)=(A^n)_{uv}.
\]
Let
\[
    l_{uv}(0)=0, \quad l_{uv}(1)=A_{uv}, \quad \text{and}\quad l_{uv}(n+1)=\sum_{w\neq u} l_{uw}(n)A_{wv}.
\]
The coefficient $l_{uv}(n)$ is the number of the paths that go from $u$ to $v$ in $n$ steps without returning to $u$ at any time prior to $n$. 

\begin{definition}
     An irreducible countable Markov shift $\Sigma$ with finite Perron value $\lambda$ is \textit{recurrent} if for some $u\in\cV$,
     $$
     \sum_{n=1}^{\infty} a_{uu}(n)/\lambda^n= \infty.
     $$
     It is \textit{positive recurrent} if it is recurrent and
     \[
       \sum_{n=1}^\infty nl_{uu}(n)/\lambda^n<\infty.
     \]
     Both definitions are independent of the choice of $u$.
\end{definition}

The following is the Perron-Frobenius theorem for countable Markov shifts. For reference, see \cite[Theorem 7.1.3]{Kitchens}.
\begin{lemma}\label{Perron-Frobenius}
    Let $\Sigma$ be an irreducible countable Markov shift with finite Perron value $\lambda$. If $\Sigma$ is recurrent, $\lambda$ has strictly positive left and right eigenvectors $r=(r_u)$ and $\ell=(\ell_u)$ with $rA=\lambda r$ and $A\ell=\lambda \ell$. And the eigenvectors are unique up to constant multiples. Moreover, if $\Sigma$ is positive recurrent, we have that $r\cdot \ell=\sum_u r_u\cdot l_u<\infty$.
\end{lemma}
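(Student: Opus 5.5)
We prove this lemma by the classical first-return construction. We fix a base vertex $u$ and build both eigenvectors from generating functions of first-passage paths evaluated at $z=1/\lambda$.

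\textbf{Step 1: the renewal identity.} We introduce the power series
\[
F_{uu}(z)=\sum_{n\ge0}a_{uu}(n)z^n,\qquad L_{uu}(z)=\sum_{n\ge1}l_{uu}(n)z^n .
\]
Decomposing a loop at $u$ according to its successive returns to $u$ gives the renewal equation $F_{uu}(z)=1/(1-L_{uu}(z))$ for $0\le z<1/\lambda$. The radius of convergence of $F_{uu}$ is $1/\lambda$ by definition of $\lambda$, and $F_{uu}(z)<\infty$ on $[0,1/\lambda)$. This forces $L_{uu}(z)<1$ there, so $L_{uu}(1/\lambda)\le1$ by monotone convergence. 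If $L_{uu}(1/\lambda)<1$, then $F_{uu}(1/\lambda)=1/(1-L_{uu}(1/\lambda))$ would be finite, contradicting recurrence. Hence
\[
\sum_{n\ge1}l_{uu}(n)\lambda^{-n}=1 .
\]
I expect this step to be the only delicate point. It is where recurrence enters, and it requires passing to the boundary of the disc of convergence via monotone limits of nonnegative series.

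\textbf{Step 2: construction and the eigen-equation.} We set $r_v:=\sum_{n\ge1}l_{uv}(n)\lambda^{-n}$, which gives $r_u=1$ by Step 1. Dually, let $l'_{vu}(n)$ be the number of paths of length $n$ from $v$ to $u$ that hit $u$ only at the final time, and set $\ell_v:=\sum_{n\ge1}l'_{vu}(n)\lambda^{-n}$.

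\emph{Finiteness.} Choose a path from $v$ back to $u$ of length $m$ that avoids $u$ before its end. Concatenating gives $l_{uu}(n+m)\ge l_{uv}(n)$, so $r_v\le\lambda^m$. The same argument bounds $\ell_v$.

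\emph{Positivity.} This follows from irreducibility: there is a path from $u$ to $v$, and we may truncate it after its last visit to $u$.

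\emph{Eigen-equation.} We split $(rA)_v=\sum_w r_wA_{wv}$ into the term $w=u$, which contributes $r_uA_{uv}=A_{uv}=l_{uv}(1)$, and the terms $w\ne u$. Using $l_{uv}(n+1)=\sum_{w\ne u}l_{uw}(n)A_{wv}$, we get
\[
(rA)_v=l_{uv}(1)+\sum_{n\ge1}l_{uv}(n+1)\lambda^{-n}=\lambda r_v .
\]
The symmetric computation gives $A\ell=\lambda\ell$; here one decomposes by the first step, and $\ell_u=1$ again by Step 1.

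\textbf{Step 3: uniqueness.} Let $x\ge0$ with $xA=\lambda x$, not identically zero, normalized so that $x_u=1$; note $x_u>0$ by irreducibility. We iterate $x=\lambda^{-N}xA^N$ and decompose each path ending at $v$ according to its last visit to $u$. This yields $x_v\ge r_v$ for every $v$. Then $y:=x-r\ge0$ satisfies $yA=\lambda y$ and $y_u=0$. Irreducibility gives $0=\lambda^n y_u=\sum_w y_w(A^n)_{wu}$ with $(A^n)_{wu}>0$ for suitable $n$, so $y\equiv0$. The same argument applies to right eigenvectors.

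\textbf{Step 4: positive recurrence.} We expand
\[
r\cdot\ell=\sum_v\sum_{n,m\ge1}l_{uv}(n)\,l'_{vu}(m)\,\lambda^{-(n+m)} .
\]
The pair (path counted by $l_{uv}(n)$, path counted by $l'_{vu}(m)$) concatenates to a first-return loop at $u$ of length $N=n+m$. Conversely, each such loop of length $N$ arises from exactly $N$ choices of cut time $n\in\{1,\dots,N\}$, with the convention that $v=u$ at $n=N$, which accounts for the $l_{uu}$ terms. Therefore
\[
r\cdot\ell=\sum_{N\ge1}N\,l_{uu}(N)\lambda^{-N},
\]
and this is finite by positive recurrence.
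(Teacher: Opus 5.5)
Your proof is correct, and it is the classical Vere-Jones first-passage/last-exit argument. The paper gives no proof of its own and cites Kitchens, Theorem~7.1.3, which proceeds this way: the eigenvectors are first-passage generating functions evaluated at $1/\lambda$, and $r\cdot\ell=\sum_N N\,l_{uu}(N)\lambda^{-N}$. One bookkeeping point in Step~4 needs tidying: for $v=u$ your double series counts pairs of first-return loops, not a first-return loop with a cut, so take that term directly as $r_u\ell_u=1=\sum_N l_{uu}(N)\lambda^{-N}$, which gives the same total.
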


After normalization, we can assume $r\cdot \ell=1$.

The \textit{Gurevich entropy} of the Markov shift $\Sigma$ associated with $\cG$ is given by
\[
     h(\Sigma):=\sup\{h_{top}(\Sigma(\cG'),\sigma):\cG' \text{ is a finite subgraph}\}.
\]
The variational principle is still valid for Markov shifts:
\begin{theorem}[\cite{Gurevich69}]\label{variational}
    For an irreducible countable Markov shift $\Sigma$ with Perron value $\lambda$ , we have 
    \[
         h(\Sigma)=\sup_{\mu} h_\mu(\sigma)=\log \lambda
    \]
    where $\mu$ runs over all invariant Borel probability measures. One should understand that if one value in the equality is infinite, the other two values are also infinity.
\end{theorem}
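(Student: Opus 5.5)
The plan is to prove the three-way equality through two chains of inequalities: $h(\Sigma)\le \log\lambda \le h(\Sigma)$, and $h(\Sigma)\le \sup_\mu h_\mu(\sigma)\le \log\lambda$. Throughout, fix a vertex $u$; by irreducibility every quantity below is independent of this choice. If $(A^n)_{uu}=\infty$ for some $n$, then $\lambda=\infty$. In that case there are finite subgraphs containing arbitrarily many loops of length $n$ at $u$, so $h(\Sigma)=\infty$ by the argument of the next paragraph, and then $\sup_\mu h_\mu=\infty$ by the measure step below. So we may assume all $(A^n)_{uv}$ are finite.

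\emph{Combinatorial part.} For a finite subgraph $\cG'$ with adjacency matrix $A'$, $h_{top}(\Sigma(\cG'))$ is the logarithm of the spectral radius of $A'$. This spectral radius is the maximum over irreducible components $C$ of $\limsup_n ((A'^n)_{vv})^{1/n}$ for $v\in C$. Since $(A'^n)_{vv}\le (A^n)_{vv}$ and the Perron value does not depend on the vertex, we get $h(\Sigma)\le\log\lambda$.

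Conversely, fix $n$ and let $\cG_n$ be the finite subgraph formed by the $(A^n)_{uu}$ loops of length $n$ at $u$. Concatenating these loops gives $(A_n^{mn})_{uu}\ge ((A^n)_{uu})^m$. Hence the spectral radius of $A_n$ is at least $((A^n)_{uu})^{1/n}$, and taking the limsup in $n$ yields $h(\Sigma)\ge\log\lambda$.

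\emph{Measures.} The inequality $\sup_\mu h_\mu\ge h(\Sigma)$ is immediate. For each finite subgraph, the (Parry) measure of maximal entropy of the finite subshift is a $\sigma$-invariant probability measure on $\Sigma$ with entropy $h_{top}(\Sigma(\cG'))$.

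The main step is $h_\mu(\sigma)\le\log\lambda$ for every invariant $\mu$. By ergodic decomposition it suffices to treat ergodic $\mu$. Since $\mu(\Sigma^\#)=1$, we may pick $u$ with $\mu[u]>0$ and pass to the first-return map $\sigma_u$ on the cylinder $[u]$ with the normalized measure $\mu_u$. Abramov's formula gives $h_\mu(\sigma)=\mu[u]\,h_{\mu_u}(\sigma_u)$.

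The partition $\beta$ of $[u]$ according to the first-return loop $w$ (a path from $u$ back to $u$ avoiding $u$ in between, of length $\tau(w)$) is generating for $\sigma_u$. Therefore $h_{\mu_u}(\sigma_u)\le H_{\mu_u}(\beta)=-\sum_w p_w\log p_w$, and by Kac's lemma $\sum_w p_w\tau(w)=1/\mu[u]$. Gibbs' inequality, compared against the weights $\lambda^{-\tau(w)}$, gives
\[
-\sum_w p_w\log p_w\le \log\lambda\sum_w p_w\tau(w)+\log\sum_{n\ge1} l_{uu}(n)\lambda^{-n}.
\]

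It remains to show $\sum_n l_{uu}(n)\lambda^{-n}\le1$. This follows from the renewal identity $\sum_n a_{uu}(n)s^n = 1/(1-F(s))$, where $F(s)=\sum_n l_{uu}(n)s^n$. The left side converges for $0<s<1/\lambda$, so $F(s)<1$ there, and letting $s\uparrow1/\lambda$ gives $F(1/\lambda)\le 1$ by monotone convergence. In particular $H_{\mu_u}(\beta)$ is finite, and
\[
h_\mu(\sigma)=\mu[u]\,h_{\mu_u}(\sigma_u)\le \mu[u]\cdot\frac{1}{\mu[u]}\log\lambda=\log\lambda.
\]
I expect this induced-map and Gibbs-inequality step, including checking that $\beta$ generates, to be the main obstacle. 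The remaining steps are routine.
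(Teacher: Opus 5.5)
The paper gives no proof of this statement. It quotes it from Gurevich and uses it as a black box, notably to get $h(\Sigma)=\log d$ in the proof of the main theorem. Your argument is therefore an independent, self-contained route, and it is correct.

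All three parts go through:
\begin{itemize}
\item The identification $h(\Sigma)=\log\lambda$: spectral radii of finite subgraphs are bounded by $\lambda$, and concatenating the length-$n$ loops at $u$ gives the reverse inequality.
\item The lower bound $\sup_\mu h_\mu(\sigma)\ge h(\Sigma)$, via maximal entropy measures of the compact finite subshifts.
\item The upper bound $h_\mu(\sigma)\le\log\lambda$, via inducing on $[u]$ with Abramov, Kac, Gibbs against the weights $\lambda^{-\tau(w)}$, and $F(1/\lambda)\le 1$.
\end{itemize}

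The step you flagged is harmless. By Poincar\'e recurrence for $\sigma$ and $\sigma^{-1}$, almost every $x\in[u]$ visits $[u]$ infinitely often in both time directions. So the two-sided sequence of first-return loops of $x$ under $\sigma_u$ recovers every coordinate of $x$, and $\beta$ is a two-sided generator. Your Gibbs bound makes $H_{\mu_u}(\beta)$ finite, so the Kolmogorov--Sinai theorem for countable partitions gives $h_{\mu_u}(\sigma_u)\le H_{\mu_u}(\beta)$.

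What your route buys over the citation is structure. Track the equality case of the same chain of inequalities for an ergodic $\mu$. Then $h_\mu(\sigma)=\log\lambda<\infty$ forces:
\begin{itemize}
\item $F(1/\lambda)=1$, i.e.\ recurrence;
\item $p_w=\lambda^{-\tau(w)}$, hence $\sum_n n\,l_{uu}(n)\lambda^{-n}=1/\mu[u]<\infty$, i.e.\ positive recurrence;
\item independence of $\beta$ under $\sigma_u$, which pins down $\mu_u$ and hence $\mu$.
\end{itemize}
This is essentially the \emph{only if} and uniqueness half of Gurevich's criterion, which the paper also quotes without proof. The citation, of course, buys brevity.

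A few small points of presentation:
\begin{itemize}
\item Write the renewal identity as $G(s)=1+F(s)G(s)$ with $G(s)=\sum_{n\ge0}a_{uu}(n)s^n$, including the $n=0$ term. For $0<s<1/\lambda$, where $G(s)<\infty$, this gives $F(s)=1-1/G(s)<1$ directly. The form $1/(1-F(s))$ tacitly presupposes $F(s)<1$.
\item Not every quantity is independent of $u$; for instance $F(1/\lambda)$ depends on $u$ in the transient case. This does no harm, since you only use $F(1/\lambda)\le 1$, which holds for each $u$.
\item The reduction to finite entries is fine, because by irreducibility an infinite off-diagonal entry produces an infinite diagonal one.
\item For a reducible finite subgraph, \emph{the} Parry measure should be read as a maximal entropy measure of the compact subshift, e.g.\ the Parry measure of a component of maximal spectral radius.
\end{itemize}
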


The next theorem gives a necessary and sufficient condition for the existence of a measure of maximal entropy when the Markov shift is irreducible.

\begin{theorem}[\cite{Gurevich70}]\label{thm:MME-posi recurr}
     Let $\Sigma$ be an irreducible countable Markov shift with Perron value $\lambda< \infty$. Then, it admits a measure of maximal entropy if and only if $\Sigma$ is positive recurrent and such a measure is unique if it exists.
\end{theorem}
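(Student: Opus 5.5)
The argument goes through the first-return map to a single vertex. It has two directions: build an explicit measure of maximal entropy in the positive recurrent case, and show that any ergodic measure of maximal entropy forces positive recurrence and is uniquely determined.

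\emph{Sufficiency.} Assume $\Sigma$ is positive recurrent. By \cref{Perron-Frobenius} we get strictly positive $r,\ell$ with $rA=\lambda r$, $A\ell=\lambda\ell$ and $r\cdot\ell=1$.

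1. Define the Parry measure on cylinders by
\[
 m([v_0,\dots,v_n])=\frac{r_{v_0}\,\ell_{v_n}}{\lambda^{n}}
\]
for admissible words. The eigen-equations give Kolmogorov consistency in both directions and $\sigma$-invariance. The normalization $r\cdot\ell=1$, which is finite precisely because of positive recurrence, makes $m$ a probability measure.

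2. Observe that $m$ is the stationary Markov measure with transition probabilities $P_{uv}=A_{uv}\ell_v/(\lambda\ell_u)$.

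3. Compute the entropy through the conditional entropy of the one-symbol partition given the past, which avoids needing $H(\text{partition})<\infty$. This gives
\[
 h_m=-\sum_u r_u\ell_u\sum_v P_{uv}\log P_{uv}.
\]
The terms $\log(\ell_v/\ell_u)$ telescope, leaving $h_m=\log\lambda$. Since the partition may have infinite entropy, I would make this rigorous either by approximating with finite subgraphs or by doing the same computation for the induced system below. By \cref{variational}, $\log\lambda$ is maximal, so $m$ is a measure of maximal entropy.

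\emph{Necessity and uniqueness.} Let $\nu$ be a measure of maximal entropy. By affinity of entropy, almost every ergodic component is also maximal, so we may take $\nu$ ergodic with $h_\nu=\log\lambda$.

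1. Fix a vertex $u$ with $\nu([u])>0$ and induce on $[u]$. The first-return map is conjugate to the full shift on the countable alphabet $\mathcal L$ of first-return loops at $u$. A loop of length $n$ occurs in $l_{uu}(n)$ ways, and the return time is $|\gamma|$ on the loop $\gamma$.

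2. Apply Abramov's formula, $h_{\nu_u}=h_\nu/\nu[u]$, together with Kac's lemma, $\int|\gamma|\,d\nu_u=1/\nu[u]$. Together they give
\[
 h_{\nu_u}-\log\lambda\int|\gamma|\,d\nu_u=0.
\]

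3. Bound the same quantity from above. Any shift-invariant measure on $\mathcal L^{\mathbb Z}$ has entropy at most that of the Bernoulli measure with the same one-letter marginals $p_\gamma$. Gibbs' inequality then gives
\[
 h_{\nu_u}-\log\lambda\int|\gamma|\,d\nu_u\le \log\sum_{\gamma\in\mathcal L}\lambda^{-|\gamma|}=\log\sum_n l_{uu}(n)\lambda^{-n}.
\]

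4. Show that the right-hand side is at most $0$. The generating function $L(s)=\sum_n l_{uu}(n)s^n$ satisfies the renewal identity $\sum_n a_{uu}(n)s^n=1/(1-L(s))$. If $L(1/\lambda)>1$, this series would have radius of convergence strictly below $1/\lambda$, contradicting the definition of $\lambda$. Hence $L(1/\lambda)\le1$.

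5. Read off the equality case. Since the left-hand side equals $0$, all inequalities in step 3 are equalities. This forces $L(1/\lambda)=1$, and the renewal identity then shows the series $\sum_n a_{uu}(n)\lambda^{-n}$ diverges, i.e.\ $\Sigma$ is recurrent. It also forces $\nu_u$ to be the Bernoulli measure with $p_\gamma=\lambda^{-|\gamma|}$. Kac's lemma gives
\[
 \sum_n n\,l_{uu}(n)\lambda^{-n}=1/\nu[u]<\infty,
\]
which is positive recurrence.

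6. Conclude uniqueness. The induced measure $\nu_u$ determines $\nu$, and we have shown $\nu_u$ does not depend on $\nu$. Therefore any two ergodic measures of maximal entropy coincide. A non-ergodic one would have all of its ergodic components maximal, hence equal to this common measure, so it is ergodic after all.

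\emph{Main obstacle.} The delicate step is the inequality and its equality case (steps 3 and 5) in the countable-alphabet setting. There $H(p)$ may be infinite, and $\int|\gamma|\,d\nu_u$ is finite only by Kac's lemma. I would first establish $H(p)<\infty$: the quantity $H(p)-\log\lambda\sum_\gamma p_\gamma|\gamma|$ is bounded above by step 4, and $\sum_\gamma p_\gamma|\gamma|$ is finite by Kac. Once $H(p)<\infty$, the standard finite-entropy argument gives both the inequality and the characterization of equality.
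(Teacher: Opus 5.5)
Your proof is correct. It necessarily differs from the paper's treatment, because the paper gives no proof of this statement. It quotes Gurevich and then uses the result, together with \cref{Perron-Frobenius}, to obtain the explicit formula \eqref{eq:MME}.

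The standard route (Parry's argument for finite shifts, extended by Gurevich) first reduces to Markov measures. It uses $h_\nu(\sigma)\le H_\nu(\alpha_0\mid\alpha_{-1})$, with equality for the one-step Markov measure having the same two-symbol marginals. It then maximizes $-\sum_u\pi_u\sum_v P_{uv}\log P_{uv}$ over stochastic matrices supported on $A$. The maximizer is the Parry measure, which is a probability measure exactly when $r\cdot\ell<\infty$.

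You instead induce on $[u]$ and reduce everything to the renewal identity plus Gibbs' inequality for the Bernoulli shift on first-return loops. This buys two things:
\begin{enumerate}
\item It handles cleanly the fact that the one-symbol partition may have infinite entropy. The loop partition has finite entropy, and you correctly extract this from Gibbs plus Kac.
\item A single equality case delivers recurrence, positive recurrence, uniqueness, and the identification $\nu(\cdot\mid[u])=\text{Bernoulli}(\lambda^{-|\gamma|})$ all at once.
\end{enumerate}
Your route would even let you build the maximal measure as a Kakutani tower over that Bernoulli shift without invoking \cref{Perron-Frobenius}. The Markov-reduction route, by contrast, gives the Markov structure and the formula \eqref{eq:MME} of the maximizer directly.

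Two small points need tightening.

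\begin{itemize}
\item \emph{Telescoping in the sufficiency part.} The telescoping of $\log(\ell_v/\ell_u)$ is not justified as written, because $\sum_u\rho_u|\log\ell_u|$ may diverge. Use the induced computation you mention instead. The measure $m(\cdot\mid[u])$ is Bernoulli with weights $\lambda^{-|\gamma|}$, its entropy is $\log\lambda\sum_n n\,l_{uu}(n)\lambda^{-n}<\infty$, and Abramov plus Kac give $h_m=\log\lambda$.
\item \emph{Common vertex in step 6.} The vertex $u$ was chosen depending on $\nu$, so comparing two ergodic maximal measures requires a vertex charged by both. This holds because, by irreducibility, every vertex lies on some first-return loop at $u$, and each such loop has weight $\lambda^{-|\gamma|}>0$. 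Hence every ergodic maximal measure charges every cylinder $[w]$. Alternatively, compare each ergodic maximal $\nu$ directly with the Parry measure, which charges all vertices.
\end{itemize}
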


From now on, assume that $(\Sigma, \sigma)$ is an irreducible countable Markov shift with $0<h(\Sigma)<\infty$ and admits a measure of maximal entropy $\mu$. We can give a detailed description about the measure $\mu$. By \cref{thm:MME-posi recurr}, $\Sigma$ is positive recurrent. Let $r$ and $\ell$ be the eigenvectors in \cref{Perron-Frobenius}, normalized by $r\cdot\ell=1$. Define
\begin{equation}\label{eq:transition-stationary}
    K_{uv}:=\frac{A_{uv}\ell_v}{\lambda\ell_u}, \qquad \rho_u:=r_u\ell_u.
\end{equation}
The positivity of $\ell_u$ makes $K_{uv}$ well-defined. Furthermore,
\[
    \sum_{v\in\cV}K_{uv}=\frac{(A\ell)_u}{\lambda\ell_u}=1, \qquad\sum_{u\in\cV}\rho_u=r\cdot\ell=1,
\]
and
\[
    (\rho K)_v=\sum_{u\in\cV} r_u\ell_u\frac{A_{uv}\ell_v}{\lambda\ell_u}=\frac{\ell_v}{\lambda}(rA)_v=r_v\ell_v=\rho_v.
\]
Thus $K$ is a stochastic matrix and $\rho$ is a stationary probability distribution of $K$. For integers $m,n$ with $n\geq 0$, and an admissible path $u_0\to u_1\to \cdots \to u_n $, define
\[
    [u_0,\ldots,u_n]_m:=\{x\in\Sigma:x_j=u_{j-m}\text{ for }m\leq j \leq n+m \}.
\]
Prescribe cylinder probabilities by
\begin{equation}\label{eq:MME}
    \begin{aligned}
        \mu([u_0,\ldots,u_n]_m)
        &=\rho_{u_0}\prod_{j=m}^{n+m-1}K_{u_{j-m}u_{j-m+1}}\\
        &=\lambda^{-n}r_{u_0}\ell_{u_n},
    \end{aligned}
\end{equation}
where an empty product equals one. One can see that the measure of an admissible cylinder depends only on the number of the edges and the starting and ending vertices. In this way, we can get a unique probability measure on $\Sigma$ by Kolmogorov extension theorem.

We also briefly introduce the Markov chain associated with a countable Markov shift $\Sigma$ or its stochastic matrix $K$. Define the coordinate random variables
\[
    X_n:\Sigma\to\cV,\qquad X_n((v_i)_{i\in\Z})=v_n,\qquad n\in\Z.
\]
Under $\mu$, the process $(X_n)_{n\in\Z}$ is a stationary time-homogeneous Markov chain with transition matrix $K$ and one-coordinate distribution $\rho$. More precisely the cylinder formula gives
\[
    \mu(X_{n+m+1}=v\mid X_m=u_0,\ldots,X_{m+n}=u_n)=K_{u_nv}
\]
for every integers $n,m$ with $n\geq 0$. Also, $\mu(X_n=u)=\rho_u$ for every $n\in\Z$ and $u\in\cV$.

One can also consider multistep transition probabilities for the Markov chain constructed above.
By \cite[Theorem~1.1]{Durrett99ESP}, for $m\in\Z$ and $n\geq0$,
\[
    \mu(X_{n+m}=v\mid X_m=u)=(K^n)_{uv}.
\]

Using \eqref{eq:transition-stationary}, we obtain
\begin{equation}\label{eq:matrix-identity}
    (K^n)_{uv}=\lambda^{-n}\frac{\ell_v}{\ell_u}(A^n)_{uv}, \qquad n\geq0.
\end{equation}
The identity is immediate for $n=0$, and the induction step follows from
\[
\begin{aligned}
    (K^{n+1})_{uv}
    &=\sum_{w\in\cV}(K^n)_{uw}K_{wv}\\
    &=\sum_{w\in\cV}
       \left(\lambda^{-n}\frac{\ell_w}{\ell_u}(A^n)_{uw}\right)
       \left(\frac{A_{wv}\ell_v}{\lambda\ell_w}\right)\\
    &=\lambda^{-(n+1)}\frac{\ell_v}{\ell_u}
       \sum_{w\in\cV}(A^n)_{uw}A_{wv}\\
    &=\lambda^{-(n+1)}\frac{\ell_v}{\ell_u}(A^{n+1})_{uv}.
\end{aligned}
\]

We will need the following convergence theorem for Markov chains. It states that, starting from any fixed state, the distribution at time $n$ converges to the stationary distribution.

\begin{theorem}[\cite{Durrett99ESP}]\label{markov-aperiodic}
Let $\Sigma$ be a positive recurrent aperiodic irreducible countable Markov shift with finite Perron value. Let $K$ and $\rho$ be its associated stochastic matrix and stationary probability distribution, respectively. Then, for every $u,v\in\mathcal V$,
\[
    \lim_{n\to\infty}(K^n)_{uv}=\rho_v.
\]
\end{theorem}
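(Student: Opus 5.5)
The plan is to prove this by the classical coupling argument for Markov chains. Everything is phrased through the stochastic matrix $K$ and the stationary distribution $\rho$ from \eqref{eq:transition-stationary}, so no further structure of $\Sigma$ is needed. First I record the properties of $K$ that follow from those of $A$. Since $K_{uv}>0$ exactly when $A_{uv}=1$, the formula \eqref{eq:matrix-identity} shows that $(K^n)_{uv}>0$ if and only if $(A^n)_{uv}>0$. Hence $K$ is irreducible and aperiodic. Moreover, $\rho_u=r_u\ell_u>0$ for every $u$ and $\sum_u\rho_u=1$ by \cref{Perron-Frobenius}.

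\emph{Step 1 (eventual positivity).} Fix $u$. The set $S_u=\{n\geq1:(K^n)_{uu}>0\}$ is closed under addition, and by aperiodicity it has $\gcd 1$. By the standard elementary number-theoretic lemma, $S_u$ then contains all sufficiently large integers. Combining this with irreducibility, for each pair $u,v$ we fix a path from $u$ to $v$ of some length $m$; then $(K^n)_{uv}>0$ for all $n\geq N(u,v)$.

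\emph{Step 2 (product chain).} Consider the chain on $\cV\times\cV$ with transition matrix $\widetilde K_{(a,b),(c,c')}=K_{ac}K_{bc'}$, so that $(\widetilde K^n)=K^n\otimes K^n$. By Step 1, this chain is irreducible, because $(K^n)_{ac}(K^n)_{bc'}>0$ for all large $n$. It also has the stationary probability $\rho\otimes\rho$. I then show it is recurrent, which I expect to be the only genuinely delicate point.

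Suppose the product chain were transient. Then $\sum_n(\widetilde K^n)_{(a,b),(c,c')}<\infty$, so in particular $(\widetilde K^n)_{(a,b),(c,c')}\to0$ for every pair of states. On the other hand, stationarity gives
\[
\rho_c\rho_{c'}=\sum_{a,b}\rho_a\rho_b\,(K^n)_{ac}(K^n)_{bc'}.
\]
The summands are bounded by $\rho_a\rho_b$, which is summable, and each tends to $0$. By dominated convergence the right-hand side tends to $0$, contradicting $\rho_c\rho_{c'}>0$. Hence the product chain is recurrent. Being irreducible and recurrent, it visits every state almost surely from any initial state; in particular it hits the diagonal state $(w,w)$ for a fixed $w$.

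\emph{Step 3 (coupling).} Let $(X_n,Y_n)$ be the product chain started from the law $\delta_u\otimes\rho$, and let $T$ be the first hitting time of the diagonal. By Step 2, $T<\infty$ almost surely. Define $Z_n=Y_n$ for $n<T$ and $Z_n=X_n$ for $n\geq T$. By the strong Markov property at $T$, $(Z_n)$ is again a Markov chain with kernel $K$ started from $\rho$. Hence $\Pr(Z_n=v)=\rho_v$ for all $n$, while $\Pr(X_n=v)=(K^n)_{uv}$. Since $X_n=Z_n$ on $\{T\leq n\}$, we obtain
\[
|(K^n)_{uv}-\rho_v|\le \Pr(T>n)\longrightarrow0,
\]
which proves the theorem. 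Alternatively, Step 3 could be replaced by the renewal-theorem route (Erdős–Feller–Pollard), using the first-return decomposition $a_{uu}(n)=\sum_k l_{uu}(k)a_{uu}(n-k)$ and the finiteness of $\sum_n n\,l_{uu}(n)/\lambda^n$. I would use that only if the coupling's measure-theoretic setup becomes cumbersome, since the coupling route only needs the positivity of $\rho$ and Steps 1–2.
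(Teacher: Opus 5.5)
Your proof is correct and essentially the same as the paper's: the paper gives no proof of its own and cites Durrett, whose convergence theorem is proved by this coupling scheme (eventual positivity from aperiodicity, an irreducible product chain, recurrence from the stationary law $\rho\otimes\rho$, and coupling at the first hitting time of the diagonal). Your preliminary step, transferring irreducibility, aperiodicity and positivity of $\rho$ from $A$ to $K$ via \eqref{eq:matrix-identity} and \cref{Perron-Frobenius}, is exactly what places the paper's $K$ in Durrett's setting.
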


As a corollary, we also have the following version without the aperiodicity assumption for diagonal entries of the transition matrices.

\begin{corollary}\label{lem:markov-convergence}
Let $\Sigma$ be a positive recurrent irreducible countable Markov shift with finite Perron value. Suppose $q$ is its period. Let $K$ and $\rho$ be its associated stochastic matrix and stationary probability distribution, respectively. Then, for every $u\in\mathcal V$,
\[
    \lim_{n\to\infty}(K^{qn})_{uu}=q\rho_u.
\]
\end{corollary}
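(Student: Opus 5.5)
The idea is to reduce to the aperiodic case by passing to the $q$-th power of the chain, restricted to one cyclic class, and then apply \cref{markov-aperiodic}. Fix $u\in\cV$. Irreducibility and period $q$ give the standard cyclic decomposition $\cV=C_0\sqcup\cdots\sqcup C_{q-1}$ with $u\in C_0$, where every edge goes from $C_i$ to $C_{i+1 \bmod q}$. Since $K_{vw}>0$ exactly when $A_{vw}=1$, the matrix $K$ has the same graph. Hence $K^q$ maps $C_0$ to itself: $(K^q)_{vw}=0$ for $v\in C_0$, $w\notin C_0$. So $K':=(K^q)|_{C_0\times C_0}$ is a stochastic matrix on $C_0$.

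Next I check the hypotheses for $K'$.
\begin{itemize}
\item \emph{Irreducibility.} Any path from $v\in C_0$ to $w\in C_0$ has length divisible by $q$.
\item \emph{Aperiodicity.} The set $\{n:(A^{qn})_{uu}>0\}$ has gcd $1$, by the definition of $q$.
\item \emph{Stationary distribution.} Since $\rho$ is stationary for $K$, and the mass in $C_i$ moves entirely to $C_{i+1}$, we get $\rho(C_i)=\rho(C_{i+1})$ for all $i$. Hence each $\rho(C_i)=1/q$, and $\rho'=q\rho|_{C_0}$ is a stationary probability vector for $K'$.
\item \emph{Positive recurrence.} Expected return times under $K'$ are the $K$-return times divided by $q$, so they are finite.
\end{itemize}

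Applying the aperiodic convergence theorem for positive recurrent chains on $C_0$ (\cref{markov-aperiodic} in its Markov chain form, as in \cite{Durrett99ESP}) then gives $(K'^n)_{uu}\to\rho'_u=q\rho_u$. Finally, $(K'^n)_{uu}=(K^{qn})_{uu}$ because $K^q$ preserves $C_0$.

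The main obstacle is that \cref{markov-aperiodic} is phrased for Markov shifts rather than abstract chains. I have two ways to handle it.
\begin{itemize}
\item \emph{Reduce to a Markov shift.} Realize $K'$ via the shift whose vertices are $C_0$, with an edge $v\to w$ whenever $(A^q)_{vw}>0$. The problem is that this shift's maximal-entropy chain has transition matrix $\lambda^{-q}\ell_w(A^q)_{vw}/\ell_v$ only if multiple paths are counted as parallel edges. I would avoid this by working instead with the higher block shift on length-$q$ paths starting in $C_0$.
\item \emph{Use the chain form directly (preferred).} Cite the general Markov chain convergence theorem from \cite{Durrett99ESP}, which only needs $K'$ irreducible, aperiodic and with a stationary distribution, all verified above. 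This avoids the graph issue entirely.
\end{itemize}
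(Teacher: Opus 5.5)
Your proof is correct. It is the standard reduction the paper leaves implicit when it calls this statement a corollary of \cref{markov-aperiodic}: restrict $K^q$ to the cyclic class $C_0\ni u$, check that $q\rho|_{C_0}$ is the stationary law of this irreducible aperiodic chain, and use $(K^{qn})_{uu}=((K^q|_{C_0})^n)_{uu}$. Your choice to cite the Markov-chain form of the convergence theorem is the right one. As you observed, $K^q|_{C_0}$ is generally not the stochastic matrix of any Markov shift in the paper's single-edge convention, so \cref{markov-aperiodic} as literally phrased cannot be applied to it.
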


Finally, we prove that every ergodic shift-invariant probability measure on $\Sigma$ is supported on an irreducible countable Markov subshift, as shown by the following lemma. This is already known in the works \cite{Sarig13JAMS, Katok07JMD,Buzzi09ETDS,Ovadia18JMD}. We include a proof here for completeness.

\begin{lemma}\label{lem:ergodic-component}
Let $(\Sigma,\sigma)$ be a countable Markov shift with alphabet $\mathcal V$, and let $\nu$ be an ergodic $\sigma$-invariant Borel probability measure. Then there is a strongly connected component $\mathcal C$ of the underlying graph such that
\[
    \supp(\nu)\subset\Sigma({\mathcal C}), \qquad \Sigma(\mathcal{C}) \text{ is the irreducible countable Markov subshift associated with } \mathcal{C}.
\]
Moreover, every symbol $a$ with $\nu([a]_0)>0$ belongs to $\mathcal C$.
\end{lemma}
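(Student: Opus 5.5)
The plan is to find the component from the set of symbols that $\nu$ charges. Let $\mathcal A:=\{a\in\mathcal V:\nu([a]_0)>0\}$; it is nonempty because $\sum_a\nu([a]_0)=1$. First I show that any two symbols of $\mathcal A$ are joined by a path in the graph. For $a,b\in\mathcal A$, ergodicity of $\nu$ and $\nu([a]_0),\nu([b]_0)>0$ give an $n\geq1$ with $\nu([a]_0\cap\sigma^{-n}[b]_0)>0$. This follows from the ergodic theorem applied to $\mathbf 1_{[b]_0}$ along $\nu$-a.e.\ orbit starting in $[a]_0$, or equivalently from the identity $\frac1N\sum_{n<N}\nu([a]_0\cap\sigma^{-n}[b]_0)\to\nu([a]_0)\nu([b]_0)>0$. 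Any point of $[a]_0\cap\sigma^{-n}[b]_0$ supplies an admissible path $x_0=a\to x_1\to\cdots\to x_n=b$. Hence all symbols of $\mathcal A$ lie in one strongly connected component $\mathcal C$, which settles the ``moreover'' part.

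Next I show that $\nu$-a.e.\ point uses only symbols of $\mathcal C$. Since $\nu(\Sigma^\#)=1$ and $\nu$ is invariant, for $\nu$-a.e.\ $x$ every coordinate $x_j$ lies in $\mathcal A$. Indeed, $\nu(\{x:x_j\notin\mathcal A\})=\sum_{a\notin\mathcal A}\nu([a]_j)=\sum_{a\notin\mathcal A}\nu([a]_0)=0$, and the union over $j\in\mathbb Z$ is still null. So $\nu$-a.e.\ $x$ satisfies $x_j\in\mathcal A\subset\mathcal C$ for all $j$, and consecutive symbols are joined by edges. Those edges lie within $\mathcal C$, since $\mathcal C$ is strongly connected and contains both endpoints, and the induced subgraph on a strongly connected component keeps all edges between its vertices. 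Every vertex of $\mathcal C$ has an in- and out-edge within $\mathcal C$, because $\mathcal C$ is strongly connected and contains a loop through each vertex. Here I use that $\mathcal C$ has a loop since $\nu$ gives it an infinite path, and that a component containing one edge cycle has cycles through every vertex. So $\Sigma(\mathcal C)$ is a well-defined irreducible Markov subshift with $\nu(\Sigma(\mathcal C))=1$.

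Finally, $\Sigma(\mathcal C)$ is closed in $\Sigma$. Being in it is a condition on each coordinate separately, and cylinders are clopen. A closed set of full measure contains the support, so $\supp(\nu)\subset\Sigma(\mathcal C)$.

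The only delicate point I anticipate is the degenerate case where $\mathcal C$ is a single vertex without a self-loop, which would make $\Sigma(\mathcal C)$ empty. This case is excluded because $\nu$-a.e.\ $x$ has $x_0,x_1\in\mathcal A$ with an edge $x_0\to x_1$, and both endpoints lie in $\mathcal C$.
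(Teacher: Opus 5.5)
Your proof is correct and has the same structure as the paper's. Ergodicity makes the charged symbols mutually reachable, uncharged symbols are null at every coordinate, and closedness of $\Sigma(\mathcal C)$ gives $\supp(\nu)\subset\Sigma(\mathcal C)$. The only differences are minor: you get $n\geq1$ with $\nu([a]_0\cap[b]_n)>0$ from the ergodic theorem rather than from the paper's almost-invariant set $\bigcup_{l\geq1}[b]_l$, and you explicitly check the in/out-edge condition and exclude a one-vertex component without a self-loop, which the paper leaves implicit.
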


\begin{proof}
    Write $ \cV_+:=\{a\in\mathcal V:\nu([a]_0)>0\}$. Since $\mathcal V$ is countable and the cylinders $[a]_0$ partition $\Sigma$, the set $\mathcal V_+$ is nonempty. Fix $a,b\in\mathcal V_+$ and consider
\[
    V_b:=\bigcup_{l\geq1}[b]_l.
\]
This is the set of sequences in which the symbol $b$ occurs at some strictly positive time. We have $\sigma^{-1}V_b\subset V_b$, and invariance of $\nu$ gives
\[
    \nu(\sigma^{-1}V_b)=\nu(V_b).
\]
Together with the inclusion above, this implies $\nu\bigl(V_b\mathbin{\triangle}\sigma^{-1}V_b\bigr)=0$. Thus $V_b$ is invariant modulo a $\nu$-null set. By ergodicity, $\nu(V_b)$ is either $0$ or $1$. Since
\[
    \nu(V_b)\geq\nu([b]_1)=\nu(\sigma^{-1}[b]_0)=\nu([b]_0)>0,
\]
we obtain $\nu(V_b)=1$ and hence $\nu([a]_0\cap V_b)=\nu([a]_0)>0$.

On the other hand,
\[
    [a]_0\cap V_b=\bigcup_{l\geq1}\bigl([a]_0\cap[b]_l\bigr).
\]
Consequently, there is an integer $l\geq1$ such that $\nu([a]_0\cap[b]_l)>0$. Choose a sequence $u$ in this intersection. Then $u_0=a$ and $u_l=b$, and admissibility gives a directed path
\[
    a=u_0\to u_1\to\cdots\to u_l=b.
\]
Interchanging $a$ and $b$ gives a directed path from $b$ to $a$. It follows that all symbols in $\mathcal V_+$ lie in one irreducible component, denoted by $\mathcal C$.

We next show that the corresponding Markov subshift has full $\nu$-measure. For every $a\notin\mathcal V_+$ and every $j\in\mathbb Z$, shift invariance gives
\[
    \nu([a]_j)=\nu(\sigma^{-j}[a]_0)=\nu([a]_0)=0.
\]
Both $\mathcal V$ and $\mathbb Z$ are countable. Therefore
\[
    N:=\bigcup_{j\in\mathbb Z}\ \bigcup_{a\in\mathcal V\setminus\mathcal V_+}[a]_j
\]
is a $\nu$-null set. Every sequence outside $N$ uses only symbols in $\mathcal V_+$, and hence only symbols in $\mathcal C$. Because $\mathcal{C}$ is an irreducible component, we conclude that sequences outside $N$ must belong to $\Sigma(\mathcal{C})$. Thus we have $\nu(\Sigma_{\mathcal C})=1$. As $\Sigma(\mathcal{C})$ is closed in $\Sigma$, we conclude that $\supp(\nu)\subset \Sigma(\mathcal{C})$.
\end{proof}

\section{Equidistribution in the symbolic model}\label{sec:symbolic-proof}
We establish the equidistribution of periodic points for an irreducible countable Markov shift admitting a measure of maximal entropy, under a specific upper bound on the number of periodic points. This result provides the symbolic model for the proof of our main theorem.

\begin{proposition}\label{prop:symbolic}
Let $(\Sigma,\sigma)$ be an irreducible countable Markov shift. Let $A$ be its adjacency matrix and $\lambda$ be its Perron value. Suppose the Gurevich entropy $h(\Sigma)=\log \lambda$ is finite and strictly positive and $\Sigma$ admits a maximal-entropy probability measure $\nu$. Assume that $\Fix(\sigma^n)$ is finite for every $n$ and
\begin{equation}\label{eq:trace-upper}
    \limsup_{n\to\infty}\lambda^{-n}\#\Fix(\sigma^n)\leq1.
\end{equation}
Then
\[
    \eta_n:=\lambda^{-n}\sum_{u\in\Fix(\sigma^n)}\delta_u\to \nu
\]
as $n\to\infty$.
\end{proposition}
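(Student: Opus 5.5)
The plan is to compute the mass that $\eta_n$ gives to cylinders exactly, in terms of entries of $A^n$. We then use \eqref{eq:matrix-identity} together with the Markov chain convergence theorems to obtain cylinder-wise convergence. Finally we upgrade to weak convergence using the upper bound \eqref{eq:trace-upper} on the total mass. By \cref{thm:MME-posi recurr}, $\Sigma$ is positive recurrent and $\nu$ is the unique measure of maximal entropy, so $\nu$ is the measure given by \eqref{eq:MME}. Let $r,\ell,K,\rho$ be as in \cref{Perron-Frobenius} and \eqref{eq:transition-stationary}.

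\emph{Step 1: counting periodic points in cylinders.} A point of $\Fix(\sigma^n)$ is determined by the loop $(x_0,\dots,x_n)$ of length $n$. Hence for $u\in\cV$ we have $\#(\Fix(\sigma^n)\cap[u]_0)=(A^n)_{uu}$. More generally, fix an admissible word $u_0\to\cdots\to u_m$. For $n\geq m$ we get
\[
    \#\bigl(\Fix(\sigma^n)\cap[u_0,\ldots,u_m]_0\bigr)=(A^{n-m})_{u_mu_0}.
\]
By \eqref{eq:matrix-identity}, this gives
\[
    \eta_n([u_0,\ldots,u_m]_0)=\lambda^{-m}\frac{\ell_{u_0}}{\ell_{u_m}}\cdot\Bigl(\ldots\Bigr),
\]
and more precisely $\lambda^{-n}(A^{n-m})_{u_mu_0}=\lambda^{-m}\frac{\ell_{u_m}}{\ell_{u_0}}(K^{n-m})_{u_mu_0}$. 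Since $\Fix(\sigma^n)$ is $\sigma$-invariant, $\eta_n$ is $\sigma$-invariant. Therefore cylinders $[\cdot]_j$ at any position reduce to position $0$.

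\emph{Step 2: aperiodicity.} Let $q$ be the period. We show that \eqref{eq:trace-upper} forces $q=1$. Summing Step 1 over vertices gives $\lambda^{-qn}\#\Fix(\sigma^{qn})=\sum_u (K^{qn})_{uu}$. By \cref{lem:markov-convergence} and Fatou's lemma,
\[
    \liminf_n \lambda^{-qn}\#\Fix(\sigma^{qn})\ \geq\ \sum_u q\rho_u = q.
\]
Comparing with \eqref{eq:trace-upper} gives $q\le 1$. Thus $\Sigma$ is aperiodic, and \cref{markov-aperiodic} applies. It yields $(K^{n-m})_{u_mu_0}\to\rho_{u_0}=r_{u_0}\ell_{u_0}$. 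Combined with Step 1, this gives
\[
    \eta_n([u_0,\ldots,u_m]_0)\to\lambda^{-m}r_{u_0}\ell_{u_m}=\nu([u_0,\ldots,u_m]_0)
\]
by \eqref{eq:MME}.

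\emph{Step 3: weak convergence.} The main obstacle is that $\Sigma$ is not locally compact and has countably many symbols, so cylinder convergence alone does not give weak convergence. Mass could escape to infinity, and this is exactly where the upper bound \eqref{eq:trace-upper} is used. Every open set $U\subset\Sigma$ is a countable disjoint union of cylinders, since two cylinders are either nested or disjoint. Applying Fatou's lemma to the cylinder limits gives $\liminf_n\eta_n(U)\geq\nu(U)$. Also $\limsup_n\eta_n(\Sigma)\le 1=\nu(\Sigma)$, and taking $U=\Sigma$ in the previous inequality gives $\eta_n(\Sigma)\to1$. The portmanteau theorem, in its version for measures of asymptotically equal finite total mass on a metric space, then gives $\eta_n\to\nu$ weakly. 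Explicitly, for closed $F$ we have $\limsup\eta_n(F)=\lim\eta_n(\Sigma)-\liminf\eta_n(F^c)\le\nu(F)$. Equivalently, normalizing $\eta_n$ by its total mass and applying the standard portmanteau theorem gives the same conclusion.
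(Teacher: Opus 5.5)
Your proof is correct and follows essentially the same route as the paper. You count $\Fix(\sigma^n)$ in cylinders via $(A^{n-m})_{u_m u_0}$, use the trace bound with the period-$q$ Markov convergence to force $q=1$, get cylinder convergence from the aperiodic convergence theorem, and pass to open sets; your explicit check that $\eta_n(\Sigma)\to 1$ makes precise a step the paper leaves implicit. One small correction: cylinders on different coordinate intervals need not be nested or disjoint, so decompose open sets into symmetric cylinders $[w_{-N},\ldots,w_N]_{-N}$ (the metric balls, which do have this property), or use finite unions and inclusion--exclusion as the paper does.
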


\begin{proof}

By \cref{thm:MME-posi recurr}, the shift is positive recurrent. For $n\geq1$, the diagonal entry $(A^n)_{uu}$ counts the points $v\in\Fix(\sigma^n)$ with $v_0=u$. Let $q$ be the period of $\Sigma$. Thus, for any finite set $F\subset\cV$, \cref{lem:markov-convergence}, \eqref{eq:matrix-identity}, and \eqref{eq:trace-upper} imply
\begin{align*}
 q\sum_{u\in F}\rho_u
    &=\lim_{n\to\infty}\lambda^{-qn}
        \sum_{u\in F}(A^{qn})_{uu}\\
    &\leq\limsup_{n\to\infty}\lambda^{-qn}
        \#\Fix(\sigma^{qn})\leq1.
\end{align*}
Exhausting $\cV$ by finite sets gives $q\leq1$, hence $q=1$. Therefore, $\Sigma$ is aperiodic. Then, \cref{markov-aperiodic} and \eqref{eq:matrix-identity} yields
\begin{equation}\label{eq:matrix-limit}
    \lambda^{-n}(A^n)_{uv}\longrightarrow r_{v}\ell_{u}.
\end{equation} Fix an admissible cylinder $C=[u_0,\cdots, u_m]_0$. For $n>m$, completing its prescribed path to an $n$-periodic sequence amounts to choosing a path of length $n-m$ from $u_m$ to $u_0$. Consequently,
\[
    \#\bigl(\Fix(\sigma^n)\cap C\bigr)=(A^{n-m})_{u_m,u_0}.
\]
Using \eqref{eq:matrix-limit} and \eqref{eq:MME}, we obtain
\begin{align*}
 \eta_n(C)
    &=\lambda^{-m}\lambda^{-(n-m)}(A^{n-m})_{u_m,u_0}\\
    &\longrightarrow\lambda^{-m}r_{u_0}\ell_{u_m}
    =\nu(C).
\end{align*}
Since both $\eta_n$ and $\nu$ are shift-invariant, we can obtain the convergence for cylinders on any finite interval. Note that the intersection of finite cylinders is empty or a disjoint union of several cylinders, the above convergence result also holds for finite union of cylinders because of inclusion-exclusion principle. Because the cylinders on finite intervals form a countable topology basis for $\Sigma$, for any open set $G\subset\Sigma$, we can write $G=\bigcup_{j\geq1}C_j$, where each $C_j$ is a cylinder contained in $G$. For every $N\geq1$,
\[
    \liminf_{n\to\infty}\eta_n(G)\geq\lim_{n\to\infty}\eta_n\Bigl(\bigcup_{j=1}^{N}C_j\Bigr)=\nu\Bigl(\bigcup_{j=1}^{N}C_j\Bigr).
\]
Letting $N\to\infty$ yields $\liminf_{n\to\infty}\eta_n(G)\geq\nu(G)$ for any open subset $G$. This implies $\eta_n$ converges to $\nu$. Therefore, we finish the proof.
\end{proof}

\section{Proof of the main theorem}\label{sec:main}

\begin{proposition}\label{lem:coding}
Let $g$ be a $C^{1+\alpha}$ diffeomorphism of a compact smooth manifold $M$ of dimension $k$, and let $\mu$ be an ergodic $g$-invariant probability measure. Assume that $\mu$ has $l$ negative Lyapounov exponents and $k-l$ positive Lyapounov exponents for some $1\leq l\leq k-1$. There exist an irreducible countable Markov shift $(\Sigma,\sigma)$ with a $\sigma$-invariant ergodic probability measure $\nu$ and a H\"older continuous map $\pi:\Sigma\to M$ such that
\begin{equation}\label{eq:injective-symbolic-coding}
    \pi\circ\sigma=g\circ\pi,\qquad \pi\text{ is injective},\qquad \pi_*(\nu)=\mu.
\end{equation}
Moreover, there is a $Dg$-invariant splitting $T_xM=E^s(x)\oplus E^u(x)$ on $\pi(\Sigma)$ such that
\[
    \dim E^s(x)=l,\qquad \dim E^u(x)=k-l \qquad\text{for every }x\in\pi(\Sigma),
\]
with forward exponential contraction on $E^s(x)$ and backward exponential contraction on $E^u(x)$. Every periodic point of $\Sigma$ maps to a hyperbolic saddle periodic point of $g$ with stable dimension $l$.
\end{proposition}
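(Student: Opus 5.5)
This proposition is assembled from the symbolic coding of Ben Ovadia \cite{Ovadia18JMD} and the injectivity result of Buzzi \cite{Buzzi20JMD}, followed by a reduction to an irreducible component using \cref{lem:ergodic-component}.

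\textbf{Step 1: coding.} Since $\mu$ is ergodic and hyperbolic with no zero exponents, it is $\chi$-hyperbolic for some $0<\chi<\min|\text{exponents}|$. Ben Ovadia's theorem for $C^{1+\alpha}$ diffeomorphisms in arbitrary dimension then gives a countable Markov shift $\widehat\Sigma$ and a H\"older map $\widehat\pi:\widehat\Sigma\to M$ with the following properties.
\begin{itemize}
\item[(i)] $\widehat\pi\circ\sigma=g\circ\widehat\pi$.
\item[(ii)] $\widehat\pi(\widehat\Sigma^\#)$ carries every $\chi$-hyperbolic ergodic measure, in particular $\mu$.
\item[(iii)] $\widehat\pi|_{\widehat\Sigma^\#}$ is finite-to-one.
\end{itemize}
Each point of $\widehat\pi(\widehat\Sigma)$ is obtained as the intersection of an admissible $s$-manifold and $u$-manifold in Pesin charts. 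Along the coding, these charts carry a $Dg$-invariant splitting $E^s\oplus E^u$ of dimensions $l$ and $k-l$, with uniform exponential contraction in forward time on $E^s$ and in backward time on $E^u$ (with the usual Pesin tempered constants). The image of a $\sigma$-periodic sequence is a $g$-periodic point. The chart estimates give contraction of $Dg^n$ on $E^s$ and expansion on $E^u$, so this point is a hyperbolic saddle with stable dimension $l$.

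\textbf{Step 2: injectivity.} Buzzi \cite{Buzzi20JMD} refines such a finite-to-one coding into one on a new countable Markov shift $\Sigma'$. The new map $\pi'$ is H\"older, satisfies $\pi'\circ\sigma=g\circ\pi'$, and is injective on $\Sigma'^{\#}$. It still lifts $\mu$: some ergodic $\sigma$-invariant $\nu'$ satisfies $\pi'_*\nu'=\mu$. This lift exists because $\mu(\widehat\pi(\widehat\Sigma^\#))=1$ and the finite-to-one map allows a measurable lift, from which one takes an ergodic component. The property that periodic points map to saddles of index $l$ is inherited, since the splitting is pulled back along the refinement.

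\textbf{Step 3: irreducible component.} Apply \cref{lem:ergodic-component} to $\nu'$. This produces a strongly connected component $\mathcal C$ with $\nu'(\Sigma(\mathcal C))=1$. Set $\Sigma:=\Sigma(\mathcal C)$, $\nu:=\nu'|_{\Sigma}$ and $\pi:=\pi'|_\Sigma$. Then $\Sigma$ is irreducible, $\nu$ is ergodic, and $\pi_*\nu=\mu$.

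\textbf{Main obstacle: injectivity on all of $\Sigma$.} The statement requires $\pi$ to be injective on $\Sigma$, whereas Buzzi's theorem gives injectivity only on the regular part $\Sigma^\#$. Only $\Sigma^\#$ is needed later: it contains all periodic points and has full measure for every invariant measure. I would therefore first try to reduce to a subshift, or recode, so that injectivity holds on the whole of $\Sigma$. If that fails, I would read \eqref{eq:injective-symbolic-coding} as injectivity on $\Sigma^\#$. This suffices for the application, since the counting argument only uses injectivity on $\Fix(\sigma^n)\subset\Sigma^\#$.
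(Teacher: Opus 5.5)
Your architecture is the same as the paper's: Ben Ovadia's coding, then Buzzi's refinement, then lifting $\mu$, then restricting to the component from \cref{lem:ergodic-component}. Two steps do not go through as written.

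\textbf{The stable index.} In Step 1 you assert that every point of $\widehat\pi(\widehat\Sigma)$ has a splitting with $\dim E^s=l$, but nothing in Ben Ovadia's theorem gives this. His shift is built to code every $\chi$-hyperbolic measure, whatever its index. So a priori the stable dimension varies over $\widehat\pi(\widehat\Sigma)$, and the full coding may contain periodic orbits of other indices. The conclusion $\dim E^s=l$ on $\pi(\Sigma)$ can only hold after restricting to the component that carries the lift of $\mu$, and it needs an argument there. The paper supplies one:
\begin{itemize}
\item The H\"older dependence of the splitting in Ben Ovadia's Proposition~5.1, composed with the $1$-Lipschitz map $\theta$, makes $u\mapsto E^s(\pi(u))$ continuous on $\Sigma$.
\item Hence $j(u)=\dim E^s(\pi(u))$ is continuous and $\sigma$-invariant, so it is constant on the closure of a dense orbit, which is all of the irreducible $\Sigma$.
\item Finally $j=l$ $\nu$-a.e., because $\pi_*\nu=\mu$ and $\mu$-typical points are Oseledets regular with index $l$.
\end{itemize}
Alternatively, you could use that the index is built into each double chart and is preserved along edges of the graph. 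Without some such step, your claim that every periodic point of $\Sigma$ maps to a saddle of stable dimension $l$ is unsupported. Your remark that the property is ``inherited'' along the refinement $\theta$ does not help, since the gap is already present on $\widehat\Sigma$.

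\textbf{Injectivity.} Your ``main obstacle'' comes from misreading Buzzi's theorem. As the paper uses it, his Main Theorem produces a countable Markov shift $\Sigma'$, possibly not locally compact, and a $1$-Lipschitz map $\theta\colon\Sigma'\to\widehat\Sigma^\#$ such that:
\begin{itemize}
\item $\pi'=\widehat\pi\circ\theta$ is injective on all of $\Sigma'$, not only on $\Sigma'^\#$;
\item the image of $\pi'$ has full $\mu$-measure.
\end{itemize}
Losing local compactness is exactly the price paid for this. So no further recoding is needed. Your fallback of reading the statement as injectivity on $\Sigma^\#$ would prove a weaker proposition. As you note, that weaker version suffices for \cref{thm:main}, but it is not the statement given.

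Relatedly, you should lift $\mu$ on the new shift rather than through the finite-to-one map $\widehat\pi$; your Step 2 conflates the two. Since $\pi'$ is an injective continuous map on a Polish space whose image has full $\mu$-measure, the Lusin--Souslin theorem gives a Borel inverse on the image. Then $(\pi'^{-1})_*\mu$ is automatically $\sigma$-invariant and ergodic, being isomorphic to $\mu$, so no measurable selection or ergodic decomposition is needed.
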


\begin{proof}
Choose $\chi>0$ smaller than the absolute value of every Lyapunov exponent of $\mu$. By \cite[Theorem~0.1]{Ovadia18JMD}, there exist a locally compact countable Markov shift $(\Sigma_0,\sigma_0)$ and a H\"older continuous map $\pi_0:\Sigma_0\to M$ such that
\[
    \pi_0\circ\sigma_0=g\circ\pi_0, \qquad \mu\bigl(\pi_0(\Sigma_0^\#)\bigr)=1,
\]
and $\pi_0|_{\Sigma_0^\#}$ is finite-to-one. Recall that $\Sigma_0^\#$ is defined in \eqref{eq:regular}.

The restriction $\pi_0|_{\Sigma_0^\#}$ is Borel, and $g$ is a Borel automorphism of $M$. Moreover, Ben Ovadia's construction in \cite{Ovadia18JMD} shows that this restriction has the Bowen property with respect to a locally finite relation, which is also noted in \cite[Section~1.2, ``Beyond surface diffeomorphisms'']{Buzzi20JMD}. Thus Buzzi's Main Theorem \cite{Buzzi20JMD} provides a countable Markov shift\footnote{It may no longer be locally compact. See \cite[Section~1.2 and Appendix~B]{Buzzi20JMD}.} $(\widetilde\Sigma,\widetilde\sigma)$ and a $1$-Lipschitz map $\theta:\widetilde\Sigma\to\Sigma_0^\#$ such that
\[
    \widetilde\pi:=\pi_0\circ\theta: \widetilde\Sigma\longrightarrow M
\]
is injective with $g\circ \widetilde{\pi}=\widetilde{\pi}\circ \widetilde{\sigma}$ and $\mu(\widetilde\pi(\widetilde\Sigma))=1$. The map $\widetilde\pi$ is also H\"older continuous.

By the Lusin--Souslin theorem, the image of $\widetilde\pi$ is Borel and its inverse is also a Borel map from $ \widetilde{\pi}(\widetilde{\Sigma})$ to $\widetilde{\Sigma} $. Consequently, $\mu$ lifts to an ergodic $\widetilde\sigma$-invariant probability measure $\widetilde\nu$ on $\widetilde\Sigma$ with $\widetilde{\pi}_*(\widetilde{\nu})=\mu$. By \cref{lem:ergodic-component}, there is an irreducible Markov component $\Sigma\subset\widetilde\Sigma$ such that $\supp(\widetilde\nu)\subset \Sigma$. Restrict $\widetilde\sigma$, $\widetilde\pi$, and $\widetilde\nu$ to this component and denote the restrictions by $\sigma$, $\pi$, and $\nu$. Then $\pi_*\nu=\mu$, and \eqref{eq:injective-symbolic-coding} is preserved.

By \cite[Proposition~5.1]{Ovadia18JMD}, every $x\in\pi(\Sigma)\subset\pi_0(\Sigma_0)$ admits a splitting
\[
    T_xM=E^s(x)\oplus E^u(x)
\]
satisfying
\begin{equation}\label{eq:coding-contraction}
    \begin{aligned}
        \limsup_{n\to\infty}\frac1n\log\bigl\|D_xg^n|_{E^s(x)}\bigr\|\leq-\frac{\chi}{2},\qquad \limsup_{n\to\infty}\frac1n\log\bigl\|D_xg^{-n}|_{E^u(x)}\bigr\|\leq-\frac{\chi}{2}.
    \end{aligned}
\end{equation}
The splitting is $Dg$-invariant; see \cite[Proposition~2.12(3) and the proof of Proposition~5.1]{Ovadia18JMD}. The H\"older continuity in \cite[Proposition~5.1]{Ovadia18JMD}, composed with $\theta|_\Sigma$, shows that $u\mapsto E^s(\pi(u))$ and $u\mapsto E^u(\pi(u))$ are continuous. Hence the function
\[
    j:\Sigma\longrightarrow\{0,\ldots,k\}, \qquad j(u):=\dim E^s(\pi(u)),
\]
is continuous. The invariance of the splitting and the semiconjugacy identity give $j\circ\sigma=j$. Since an irreducible Markov shift always has a dense orbit and $j$ is constant on the orbit, we conclude that $j$ is constant on $\Sigma$. The ergodicity and hyperbolicity of $\mu$ implies that for $\mu$-almost every point $x\in M$ (Lyapunov regular points), we have that $\dim E^s(x)=l$ and $\dim E^u(x)=k-l$. Since $\pi_*\nu=\mu$, it follows that $j=l$ $\nu$-almost everywhere. Therefore $j$ is always equal to $l$ on $\Sigma$, and
\[
    \dim E^s(\pi(u))=l,\qquad \dim E^u(\pi(u))=k-l \quad\text{for every }u\in\Sigma.
\]

Finally, suppose $\sigma^m u=u$ for some $m\geq 1$ and set $y=\pi(u)$. Then $g^m(y)=y$. The estimates in \eqref{eq:coding-contraction} and $Dg$-invariance of the splitting space implies that $y$ is a hyperbolic saddle periodic point with stable dimension $l$ and unstable dimension $k-l$.
\end{proof}

We start the proof of \cref{thm:main}.

\begin{proof}[Proof of \cref{thm:main}]

Apply \cref{lem:coding} to the smooth real diffeomorphism underlying $f$ and to the measure $\mu$ from \eqref{eq:entropy}. We obtain an injective H\"older semiconjugacy
\[
    \pi\colon(\Sigma,\sigma)\longrightarrow(X,f), \qquad \pi_*(\nu)=\mu
\] 
where $(\Sigma,\sigma)$ is an irreducible countable Markov shift and $\nu$ is a $\sigma$-invariant ergodic probability measure on $\Sigma$. The semiconjugacy and injectivity make $(\Sigma,\sigma,\nu)$ and $(X,f,\mu)$ measure-theoretically isomorphic. In particular,
\begin{equation}\label{eq:lift-entropy}
    h_\nu(\sigma)=h_\mu(f)=\log d.
\end{equation} 
For any $\sigma$-invariant probability measure $\tau$ on $\Sigma$, since $\pi$ is an injective semiconjugacy, we have
\[
    h_\tau(\sigma)=h_{\pi_*\tau}(f)\leq h_{\mathrm{top}}(f)=\log d.
\]
The variational principle \cref{variational}, together with \eqref{eq:lift-entropy}, now gives
\[
    h(\Sigma)=\log d.
\]
Thus $\nu$ is a probability measure with maximal entropy on $\Sigma$, and the Perron value of $\Sigma$ is exactly $d$. By \cref{thm:MME-posi recurr}, the Markov shift $(\Sigma, \sigma)$ is positive recurrent. By \cref{lem:coding}, injectivity of $\pi$, and \eqref{eq:DNV}, we have
\[
    \#\Fix(\sigma^n)\leq\#SP_n\leq \# P_n\leq d^n+o(d^n).
\]
In particular, the symbolic fixed-point sets are finite. Apply \cref{prop:symbolic}, we obtain
\[
    \eta_n=d^{-n}\sum_{u\in\Fix(\sigma^n)}\delta_u\to \nu.
\]
Define $\beta_n:=\pi_*\eta_n$. For $\varphi\in C(X,\R)$, since $\pi$ is H\"older continuous, the function $\varphi\circ\pi$ is bounded
and uniformly continuous. Therefore
\[
    \int_X\varphi\,d\beta_n=\int_\Sigma\varphi\circ\pi\,d\eta_n\longrightarrow\int_\Sigma\varphi\circ\pi\,d\nu =\int_X\varphi\,d\mu.
\]
We have proved
\begin{equation}\label{eq:beta-limit}
    \beta_n\to\mu,\qquad \text{ as } n \to \infty.
\end{equation}
Define
\[
   \gamma_n:=d^{-n}\sum_{x\in SP_n}\delta_x, \qquad \zeta_n:=d^{-n}\sum_{x\in P_n}\delta_x.
\]
All these measures are positive, and injectivity of $\pi$ ensures that
\begin{equation}\label{eq:sandwich}
    0\leq\beta_n\leq\gamma_n\leq\zeta_n.
\end{equation}
The upper bound \eqref{eq:DNV} gives $\limsup_n\zeta_n(X)\leq1$, together with \eqref{eq:beta-limit} and \eqref{eq:sandwich}, we immediately have that $\zeta_n\to\mu$ and $\gamma_n\to\mu$ when $n\to \infty$. Thus, we finish the proof.
\end{proof}

% --- Bibliography ------------------------------------------------------

\printbibliography[heading=bibliography,title={References}]

\end{document}

%% file: macros.tex
\newcommand{\C}{\mathbb{C}}

\newcommand{\R}{\mathbb{R}}

\newcommand{\Z}{\mathbb{Z}}

\newcommand{\Pb}{\mathbb{P}}

\DeclareMathOperator{\Fix}{Fix}

\DeclareMathOperator{\supp}{supp}

\newcommand{\cG}{\mathcal{G}}

\newcommand{\cV}{\mathcal{V}}

\newcommand{\vep}{\varepsilon}

\newcommand{\newsharedtheorem}[2]{%
  \newaliascnt{#1}{theorem}%
  \newtheorem{#1}[#1]{#2}%
  \aliascntresetthe{#1}%
}

\theoremstyle{plain}
\newtheorem{theorem}{Theorem}[section]
\newsharedtheorem{lemma}{Lemma}
\newsharedtheorem{proposition}{Proposition}
\newsharedtheorem{corollary}{Corollary}

\theoremstyle{definition}
\newsharedtheorem{definition}{Definition}
\newsharedtheorem{example}{Example}

\theoremstyle{remark}
\newsharedtheorem{remark}{Remark}

\Crefname{theorem}{Theorem}{Theorems}
\Crefname{lemma}{Lemma}{Lemmas}
\Crefname{proposition}{Proposition}{Propositions}
\Crefname{corollary}{Corollary}{Corollaries}
\Crefname{definition}{Definition}{Definitions}
\Crefname{example}{Example}{Examples}
\Crefname{remark}{Remark}{Remarks}

%% file: refs.bib
@article{Gromov2003,
  author  = {Gromov, Mikha{\"i}l},
  title   = {On the entropy of holomorphic maps},
  journal = {Enseign. Math. (2)},
  volume  = {49},
  number  = {3-4},
  year    = {2003},
  pages   = {217--235}
}

@article{Yomdin1987,
  author  = {Yomdin, Yosef},
  title   = {Volume growth and entropy},
  journal = {Israel J. Math.},
  volume  = {57},
  number  = {3},
  year    = {1987},
  pages   = {285--300},
}

@book{ALP24MAMS,
 author = {Araujo, Ermerson and Lima, Yuri and Poletti, Mauricio},
 title = {Symbolic dynamics for nonuniformly hyperbolic maps with singularities in high dimension},
 fseries = {Memoirs of the American Mathematical Society},
 series = {Mem. Am. Math. Soc.},
 volume = {1511},
 year = {2024},
 publisher = {Providence, RI: American Mathematical Society (AMS)},
}

@article{FT24JGA,
 author = {Filip, Simion and Tosatti, Valentino},
 title = {Gaps in the support of canonical currents on projective {{\(K3\)}} surfaces},
 fjournal = {The Journal of Geometric Analysis},
 journal = {J. Geom. Anal.},
 volume = {34},
 number = {3},
 pages = {14},
 note = {Id/No 76},
 year = {2024},
}

@article{FT23CJM,
 author = {Filip, Simion and Tosatti, Valentino},
 title = {Canonical currents and heights for {{\(K3\)}} surfaces},
 fjournal = {Cambridge Journal of Mathematics},
 journal = {Camb. J. Math.},
 volume = {11},
 number = {3},
 pages = {699--794},
 year = {2023},
}

@article{FT21AJM,
 author = {Filip, Simion and Tosatti, Valentino},
 title = {Kummer rigidity for {{\(K3\)}} surface automorphisms via {Ricci}-flat metrics},
 fjournal = {American Journal of Mathematics},
 journal = {Am. J. Math.},
 volume = {143},
 number = {5},
 pages = {1431--1462},
 year = {2021},
}

@article{Cantat01Acta,
 author = {Cantat, Serge},
 title = {Dynamique des automorphismes des surfaces {{\(K3\)}}},
 fjournal = {Acta Mathematica},
 journal = {Acta Math.},
 volume = {187},
 number = {1},
 pages = {1--57},
 year = {2001},
}

@article{Buzzi09ETDS,
 author = {Buzzi, J{\'e}r{\^o}me},
 title = {Maximal entropy measures for piecewise affine surface homeomorphisms},
 fjournal = {Ergodic Theory and Dynamical Systems},
 journal = {Ergodic Theory Dyn. Syst.},
 volume = {29},
 number = {6},
 pages = {1723--1763},
 year = {2009},
}

@article{Katok07JMD,
 author = {Katok, Anatole},
 title = {Fifty years of entropy in dynamics: 1958 -- 2007},
 fjournal = {Journal of Modern Dynamics},
 journal = {J. Mod. Dyn.},
 volume = {1},
 number = {4},
 pages = {545--596},
 year = {2007},
}

@article{Sarig13JAMS,
 author = {Sarig, Omri M.},
 title = {Symbolic dynamics for surface diffeomorphisms with positive entropy},
 fjournal = {Journal of the American Mathematical Society},
 journal = {J. Am. Math. Soc.},
 volume = {26},
 number = {2},
 pages = {341--426},
 year = {2013},
}

@article{Briend-Duval99,
  AUTHOR = {Briend, Jean-Yves and Duval, Julien},
     TITLE = {Exposants de {L}iapounoff et distribution des points
              p\'eriodiques d'un endomorphisme de {$\mathbb {CP}^k$}},
   JOURNAL = {Acta Math.},
  FJOURNAL = {Acta Mathematica},
    VOLUME = {182},
      YEAR = {1999},
    NUMBER = {2},
     PAGES = {143--157},
}

@article{BLS93b,
 author = {Bedford, Eric and Lyubich, Mikhail and Smillie, John},
 title = {Distribution of periodic points of polynomial diffeomorphisms of \texorpdfstring{$\mathbb {C}^2$}{c2}},
 fjournal = {Inventiones Mathematicae},
 journal = {Invent. Math.},
 volume = {114},
 number = {2},
 pages = {277--288},
 year = {1993},
}

@article{Buzzi20JMD,
 author = {Buzzi, J{\'e}r{\^o}me},
 title = {The degree of {Bowen} factors and injective codings of diffeomorphisms},
 fjournal = {Journal of Modern Dynamics},
 journal = {J. Mod. Dyn.},
 volume = {16},
 pages = {1--36},
 year = {2020},
}

@article{DDG10ENS,
 author = {Diller, Jeffrey and Dujardin, Romain and Guedj, Vincent},
 title = {Dynamics of meromorphic maps with small topological degree. {III}: {Geometric} currents and ergodic theory},
 fjournal = {Annales Scientifiques de l'{\'E}cole Normale Sup{\'e}rieure. Quatri{\`e}me S{\'e}rie},
 journal = {Ann. Sci. {\'E}c. Norm. Sup{\'e}r. (4)},
 volume = {43},
 number = {2},
 pages = {235--278},
 year = {2010},
}

@article{dTH-Dinh12AM,
 author = {de Th{\'e}lin, Henry and Dinh, Tien-Cuong},
 title = {Dynamics of automorphisms on compact {K{\"a}hler} manifolds},
 fjournal = {Advances in Mathematics},
 journal = {Adv. Math.},
 volume = {229},
 number = {5},
 pages = {2640--2655},
 year = {2012},
}

@article {DNV,
    AUTHOR = {Dinh, Tien-Cuong and Nguy\^en, Vi\^et-Anh and Vu, Duc-Viet},
     TITLE = {Super-potentials, densities of currents and number of periodic
              points for holomorphic maps},
   JOURNAL = {Adv. Math.},
  FJOURNAL = {Advances in Mathematics},
    VOLUME = {331},
      YEAR = {2018},
     PAGES = {874--907},
}

@article {DS03JMPA,
    AUTHOR = {Dinh, Tien-Cuong and Sibony, Nessim},
     TITLE = {Dynamique des applications d'allure polynomiale},
   JOURNAL = {J. Math. Pures Appl. (9)},
  FJOURNAL = {Journal de Math\'ematiques Pures et Appliqu\'ees. Neuvi\`eme
              S\'erie},
    VOLUME = {82},
      YEAR = {2003},
    NUMBER = {4},
     PAGES = {367--423},
}

@article {DS05JAMS,
    AUTHOR = {Dinh, Tien-Cuong and Sibony, Nessim},
     TITLE = {Green currents for holomorphic automorphisms of compact
              {K}\"ahler manifolds},
   JOURNAL = {J. Amer. Math. Soc.},
  FJOURNAL = {Journal of the American Mathematical Society},
    VOLUME = {18},
      YEAR = {2005},
    NUMBER = {2},
     PAGES = {291--312},
}

@article{OT15JMST,
 author = {Oguiso, Keiji and Truong, Tuyen Trung},
 title = {Explicit examples of rational and {Calabi}-{Yau} threefolds with primitive automorphisms of positive entropy},
 fjournal = {Journal of Mathematical Sciences. University of Tokyo},
 journal = {J. Math. Sci., Tokyo},
 volume = {22},
 number = {1},
 pages = {361--385},
 year = {2015},
}

@article {DS10JAG,
    AUTHOR = {Dinh, Tien-Cuong and Sibony, Nessim},
     TITLE = {Super-potentials for currents on compact {K}\"ahler manifolds
              and dynamics of automorphisms},
   JOURNAL = {J. Algebraic Geom.},
  FJOURNAL = {Journal of Algebraic Geometry},
    VOLUME = {19},
      YEAR = {2010},
    NUMBER = {3},
     PAGES = {473--529},
}

@article{DS16,
  author  = {Dinh, Tien-Cuong and Sibony, Nessim},
  title   = {Equidistribution of saddle periodic points for 
             {H\'enon}-type automorphisms of 
             {$\mathbb C^k$}},
  fjournal = {Mathematische Annalen},
  journal  = {Math. Ann.},
  volume   = {366},
  number   = {3-4},
  pages    = {1207--1251},
  year     = {2016},
}

@article{DS17,
 author = {Dinh, Tien-Cuong and Sibony, Nessim},
 title = {Equidistribution problems in complex dynamics of higher dimension},
 fjournal = {International Journal of Mathematics},
 journal = {Int. J. Math.},
 volume = {28},
 number = {7},
 eid = {1750057},
 year = {2017},
}

@article{Gurevich69,
 author = {Gurevich, B. M.},
 title = {Topological entropy of enumerable {Markov} chains},
 fjournal = {Soviet Mathematics. Doklady},
 journal = {Sov. Math., Dokl.},
 volume = {10},
 pages = {911--915},
 year = {1969},
}

@article{Gurevich70,
 author = {Gurevich, B. M.},
 title = {Shift entropy and {Markov} measures in the path space of a denumerable graph},
 fjournal = {Soviet Mathematics. Doklady},
 journal = {Sov. Math., Dokl.},
 volume = {11},
 pages = {744--747},
 year = {1970},
}

@book{Kitchens,
 author = {Kitchens, Bruce P.},
 title = {Symbolic dynamics. {One}-sided, two-sided and countable state {Markov} shifts},
 fseries = {Universitext},
 series = {Universitext},
 year = {1998},
 publisher = {Berlin: Springer},
}

@article{LZ25,
  author  = {Luo, Muhan and Zhou, Qi},
  title   = {Equidistribution of saddle periodic points for {H}{\'e}non-like maps},
  journal = {J. Eur. Math. Soc. (JEMS)},
  note    = {to appear},
}

@article{Lyubich83FAA,
 author = {Lyubich, M. Yu.},
 title = {The maximum-entropy measure of a rational endomorphism of the {Riemann} sphere},
 fjournal = {Functional Analysis and its Applications},
 journal = {Funct. Anal. Appl.},
 volume = {16},
 pages = {309--311},
 year = {1983},
}

@article{Ovadia18JMD,
 author = {Ben Ovadia, Snir},
 title = {Symbolic dynamics for non-uniformly hyperbolic diffeomorphisms of compact smooth manifolds},
 fjournal = {Journal of Modern Dynamics},
 journal = {J. Mod. Dyn.},
 volume = {13},
 pages = {43--113},
 year = {2018},
}

@book{Durrett99ESP,
 author = {Durrett, Rick},
 title = {Essentials of stochastic processes},
 fseries = {Springer Texts in Statistics},
 series = {Springer Texts Stat.},
 year = {1999},
 publisher = {New York, NY: Springer},
}

@article {DurjaDuke,
    AUTHOR = {Dujardin, Romain},
     TITLE = {Laminar currents and birational dynamics},
   JOURNAL = {Duke Math. J.},
  FJOURNAL = {Duke Mathematical Journal},
    VOLUME = {131},
      YEAR = {2006},
    NUMBER = {2},
     PAGES = {219--247},
}

@article {deT08Invent,
    AUTHOR = {de Th\'elin, Henry},
     TITLE = {Sur les exposants de {L}yapounov des applications
              m\'eromorphes},
   JOURNAL = {Invent. Math.},
  FJOURNAL = {Inventiones Mathematicae},
    VOLUME = {172},
      YEAR = {2008},
    NUMBER = {1},
     PAGES = {89--116},
}
